\documentclass[12pt,reqno]{amsart}
\usepackage{geometry}                		
\geometry{letterpaper}                   		
\usepackage{graphicx}				
\usepackage{epstopdf}
\epstopdfsetup{update} 
\usepackage{subfigure}				
\usepackage{amssymb}

\usepackage{datetime}
\usepackage{currfile} 

\usepackage{color}

\usepackage[toc,page]{appendix}

%
\textwidth 6.2in \textheight 8.6in \oddsidemargin=0.2in
\evensidemargin=0.2in \topmargin=-0.2in

\newtheorem{thm}{Theorem}[section]

\newtheorem{lem}[thm]{Lemma}

\theoremstyle{definition}

\theoremstyle{remark}
\newtheorem{rem}{Remark}[section]

\numberwithin{equation}{section}


\numberwithin{equation}{section}

\newcommand{\red}[1]{\textcolor{black}{#1}}
\newcommand{\blue}[1]{\textcolor{black}{#1}}
	

\begin{document}

\title[] 
      {An invariant-region-preserving limiter for DG schemes to isentropic Euler equations}      
\author{
Yi Jiang and Hailiang Liu
}
\address{Iowa State University, Mathematics Department, Ames, IA 50011} \email{yjiang1@iastate.edu}
\address{Iowa State University, Mathematics Department, Ames, IA 50011} \email{hliu@iastate.edu}


\thanks{This work was supported by the National Science Foundation under Grant DMS1312636 and by NSF Grant RNMS (Ki-Net) 1107291.
}

\subjclass[2000]{65M60, 35L65, 35L45}
\keywords{Gas dynamics, discontinuous Galerkin method, Invariant region}

\begin{abstract} 

In this paper, we introduce an invariant-region-preserving (IRP) limiter for the p-system and the corresponding viscous p-system, both of which share the same invariant region. Rigorous analysis is presented to show that for smooth solutions the order of approximation accuracy is not destroyed by the IRP limiter, provided the cell average stays away from the boundary of the invariant region.  Moreover, this limiter is explicit, and easy for computer implementation.
 A generic algorithm incorporating the IRP limiter is presented for high order finite volume type schemes as long as  
 the evolved cell average of the underlying scheme stays strictly within the invariant region. 
  For any high order  discontinuous Galerkin (DG) scheme to the p-system, sufficient conditions are obtained for cell averages to stay in the invariant region.  For the viscous p-system, we design both second and third order IRP DG schemes.  Numerical experiments are provided to test the proven properties of the IRP limiter and the performance of IRP DG schemes.
\end{abstract}

\maketitle

\section{Introduction} 
\label{sec:1}
\red{We are interested in invariant-region-preserving (IRP) high order numerical approximations of solutions to systems of hyperbolic conservation laws 
\begin{equation}\label{cptfm+}
\mathbf{w}_t+F(\mathbf{w})_x=0,  \quad x\in \mathbb{R}, \quad t>0
\end{equation}
with the unknown vector $\mathbf{w}\in \mathbb{R}^m (m\geq 2)$ and the flux function $F(\mathbf{w})\in \mathbb{R}^m$, subject to initial data $ \mathbf{w}(x, 0)=\mathbf{w}_0(x)$. An invariant region $\Sigma$ to this system is a convex open set in phase space $\mathbb{R}^m$ so that if the initial data is in the region $\Sigma$, then the solution will remain in $\Sigma$. {It is desirable to construct high order numerical schemes solving (\ref{cptfm+}) that can preserve the whole invariant region $\Sigma$, which is in general a difficult problem.}
}  

\red{For scalar conservation equations the notion of invariant region is closely related to the maximum principle. In the case of nonlinear systems, the notion of maximum principle does not apply and must be replaced by the notion of invariant region. There are models that feature known invariant regions. For example, the invariant region of  $2\times 2$ ($m=2$) systems of  hyperbolic conservation laws can be described by two Riemann invariants. In this work, we focus on a model system, i.e., the p-system and its viscous counterpart, although the specific form of the system is not essential for the IRP approach. Other one-dimensional hyperbolic system of conservation laws can be studied along the same lines as long as it admits a convex invariant region. 
}

The initial value problem (IVP) for the p-system is given by 
\begin{equation}\label{ps}
\begin{aligned}
& v_t-u_x=0,  \quad x\in \mathbb{R}, \quad t>0\\
& u_t+p(v)_x=0, \\
& v=v_0 >0, \quad u=u_0, \quad x\in  \mathbb{R}, \; t=0,
\end{aligned}
\end{equation}
where  $p:  \mathbb{R}^+ \to \mathbb{R}^+$ satisfies  $p'<0$ and $p''>0$. The choice of $p=kv^{-\gamma}$ with the adiabatic gas constant $\gamma>1$ and positive constant $k>0$ leads to the isentropic (=constant entropy) gas dynamic equations. These equations represent the conservation of mass and momentum, where $v$ denotes the specific volume, $v=\frac{1}{\rho}$ and $\rho$ is the density,  $u$ denotes the velocity, see e.g. \cite{Sm94}.  This system and the counterpart in Eulerian coordinates are called compressible Euler equations. 

The entropy solution of the $p-$system can be realized \blue{as the limit of the following diffusive system} 
\begin{equation}\label{pes}
\begin{aligned}
& v_t-u_x=\epsilon v_{xx},  \quad x\in \mathbb{R}, \quad t>0, \\
& u_t+p(v)_x=\epsilon u_{xx}, \\
& v=v_0 >0, \quad u=u_0, \quad x\in  \mathbb{R}.
\end{aligned}
\end{equation}
It is well known that these two systems share  a common invariant region $\Sigma$, which is a convex open set in the phase space and expressed by two Riemann invariants of the p-system.
For more general discussion on invariant regions, we refer to \cite{Sm94, Hoff}. 

\red{The main objective of this paper is to present both an explicit IRP limiter and several high order IRP schemes to solve the above two systems. The constructed high order numerical schemes can thus preserve the whole invariant region $\Sigma$. }

\blue{IRP high order methods have seldom been studied from a numerical point of view and the main goal of the present article is to introduce the required tools and explain how the standard approach for preserving the maximum principle of scalar conservation laws should be modified.}  While the presentation is given for this particular model,  it can easily be carried over to other systems equipped with a convex invariant region. For example, the one-dimensional shallow water system, and the isentropic compressible Euler system in Eulerian coordinates (see Section 2 for details of their respective invariant regions).   On the other hand, it does not seem easy to extend the analysis to multi-dimensional setting. One technical difficulty is that the invariant region determined by the Riemann invariants does not apply to the multi-dimensional case.

\red{We should also point out the global existence of the right physical solutions to compressible Euler equations is a formidable open problem in general, and the underlying system has been of much academic interest, since the isentropic case allows for a simplified mathematical system with a rich structure. Indeed the global existence has been well established following the ideas of Lax in \cite{La73} on entropy restrictions upon bounded solutions and the program initiated by Diperna \cite{Di83a, Di83b}, extended and justified by Chen \cite{Ch}, and completed by Lions et al \cite{LPS96} for all $\gamma \in (0, \infty)$, using tools of the kinetic formulation \cite{LPT94}. 
}


Enforcing the IRP property numerically should help damp oscillations in numerical solutions, as evidenced by the maximum-principle-preserving high order finite volume schemes developed in \cite{LiuOsher1996, JiangTadmor} for scalar conservation laws. A key development along this line is the work by Zhang and Shu \cite{ZhangShu2010a}, where the authors constructed a maximum-principle-preserving limiter for finite volume (FV) or discontinuous Galerkin (DG) schemes of arbitrary order for solving scalar conservation laws. Their limiter is proved to maintain the original high order accuracy of the numerical approximation (see \cite{Zh17}).
 
For nonlinear systems of conservation laws in several space variables, invariant regions under the Lax-Friedrich schemes were studied by Frid in \cite{Fr95,Fr01}.  A recent study of invariant regions for general nonlinear hyperbolic systems using the first order continuous finite elements was given by Guermond and Popov in \cite{GP16}.  However, there has been little study on the preservation of the invariant region for systems by high order numerical schemes. In the literature, instead of the whole invariant region, positivity of some physical quantities are usually considered. Positivity-preserving finite volume schemes for Euler equations are constructed in  \cite{PerthameShu1996} based on positivity-preserving properties by the corresponding first order schemes for both density and pressure. Following \cite{PerthameShu1996}, positivity-preserving high order DG schemes for compressible Euler equations were first introduced in  \cite{ZhangShu2010b}, where the limiter introduced in \cite{ZhangShu2010a} is generalized.  
We also refer to \cite{XingZhangShu2010, XingShu2011, WangZhangShuNing2012, ZhangXiaShu2012} for more related works about  hyperbolic systems of conservation laws including Euler equations, reactive Euler equations, and shallow water equations.
 A more closely related development is the work by Zhang and Shu \cite{ZhangShu2012}, where the authors introduced a minimum-entropy-principle-preserving limiter for high order schemes to the compressible Euler equation, while the limiter is implicit in the sense that the limiter parameter is solved by Newton's iteration. It is explained there how the high order accuracy can be maintained for generic smooth solutions.  An explicit IRP limiter for compressible Euler equations was recently proposed by us in \cite{JL17}. \red{The main distinction between the limiter in \cite{ZhangShu2012} 
 and that in \cite{JL17}  is that we give an explicit formula, with a single uniform scaling parameter for the whole vector solution polynomials.
This is particularly relevant at reducing computational costs in numerical implementations. 
 }
 

In this paper, we construct an IRP limiter to preserve the whole invariant region $\Sigma$ of the p-system and the viscous p-system. The cell average of numerical approximation polynomials is used as a reference to pull each cell polynomial into the invariant region. The limiter parameter is given explicitly, which is made possible by using the convexity and concavity of the two Riemann invariants, respectively. Such explicit form is quite convenient  for computer implementation. Moreover, rigorous analysis is presented to prove that for smooth solutions the high order of accuracy is not destroyed by the limiter in general cases. By general cases we mean that in some rare cases accuracy deterioration can still  happen, as shown by example in Appendix B.

Furthermore, we present a generic algorithm incorporating this IRP limiter for high order finite volume type schemes as long as the evolved cell average by the underlying scheme stays strictly in the invariant region. This is true for first order schemes under  proper CFL conditions (see, for instance, Theorem \ref{thm:1stFV}). For high order schemes, this may hold true provided  solution values on a set of test points (called test set hereafter) stay within the invariant region, in addition to the needed CFL condition. Indeed we are able to obtain such CFL condition and test set for any high order DG schemes to the p-system.

\red{For the viscous p-system we present both second and third order DDG schemes, using the DDG diffusive flux proposed in \cite{LY10},  and prove that the cell average remains strictly within the invariant region under some sufficient conditions. In order to ensure that the diffusive contribution lies strictly in $\Sigma_0$, the interior of $\Sigma$, we use the positivity-preserving results proved by Liu and Yu in \cite{LYH14} for linear Fokker-Planck equations. 
}

The paper is organized as follows. We first review the concept of invariant region for the p-system and show that the averaging operator is a contraction in Section 2. In Section 3  we construct an explicit IRP limiter and prove that the high order of accuracy is not destroyed by such limiter in general cases. Accordingly, a generic IRP algorithm is presented for numerical implementations. In Section 4, we discuss the IRP property for cell averages of any high order finite volume type scheme for the p-system, and second and third order DG schemes for the viscous p-system.  Sufficient conditions include a CFL condition and a test set for each particular scheme with forward Euler time-discretization.  In Section 5, we present extensive numerical experiments to test the desired properties of the IRP limiter and the performance of the IRP DG schemes. In addition, the convergence of the viscous profiles to the entropy solution is illustrated from a numerical point of view.  Concluding remarks are given in Section 6.



\section{Invariant Region and Averaging}
\label{sec:2}
The p-system is strictly hyperbolic and admits two Riemann invariants
\begin{align*}
r=u-\int ^v_m\sqrt{-p'(\xi)}d\xi, \quad s=u+\int ^v_m\sqrt{-p'(\xi)}d\xi, \quad m=\inf _xv_0(x)>0,
\end{align*}
which, associated with the eigenvalues $\pm \sqrt{-p'(v)}$, satisfy
\begin{align*}
\begin{cases}
r_t+\sqrt{-p'(v)}r_x=0, \\
s_t-\sqrt{-p'(v)}s_x=0.
\end{cases}
\end{align*}
In addition we assume that the pressure function $p$ also satisfies
\begin{align}\label{p0}
\int ^v_0\sqrt{-p'(\xi)}d\xi =\infty,\quad \forall v>0.
\end{align}
This condition is met for $p(v)=v^{-\gamma}$ with $\gamma > 1$.  Set
\begin{align*}
g(v)=\int ^{v}_m\sqrt{-p'(\xi)}d\xi, 
\end{align*}
then the assumptions on the pressure implies that $g$ is increasing, concave and tends asymptotically towards $v=0$, i.e., 
\begin{align*}
g'(v)=\sqrt{-p'(v)}>0, \quad    g''(v)=\frac{-p''(v)}{2\sqrt{-p'(v)}}<0, \quad g(0)=-\infty.
\end{align*}
This shows that 
\begin{align}\label{rs}
 r =u-g(v) \; \hbox{is convex, and}\; \;  s=u+g(v)\;\; \text{ is concave},
\end{align}
and two level set curves $r=r_0$ and $s=s_0$ must intersect. We now fix
\begin{align}\label{r0s0}
r_0=\sup _x{r(v_0(x), u_0(x))}, \quad s_0=\inf _x(s(v_0(x), u_0(x)))
\end{align}
and define
\begin{align*}
\Sigma=\{(v,u)^\top |\quad  r\leq r_0, s\geq s_0\},
\end{align*}
which is known as the invariant region of the p-system, see \cite{Sm94}, in the sense that if $(v_0(x),u_0(x))^\top \in \Sigma$, then the entropy solution $(v(x,t),u(x,t))^\top \in \Sigma$.

This implies that for entropy solution, $v$ remains bounded away from $0$ and $u$ remains bounded in $L^{\infty}$-norm. But no upper bound on $v$ is available because $v=\infty$ corresponds to $\rho =0$ in Eulerian coordinates, which may actually happen.

In what follows we shall use $\Sigma _0$ to denote $\Sigma$ without the two boundary sides, i.e., 
\begin{align*}
\Sigma _0= \{(v,u)^\top \big | \quad r< r_0, s> s_0\}. 
\end{align*}

\begin{figure}[htbp]
\centering
\subfigure{\includegraphics[width=0.6\textwidth, clip]{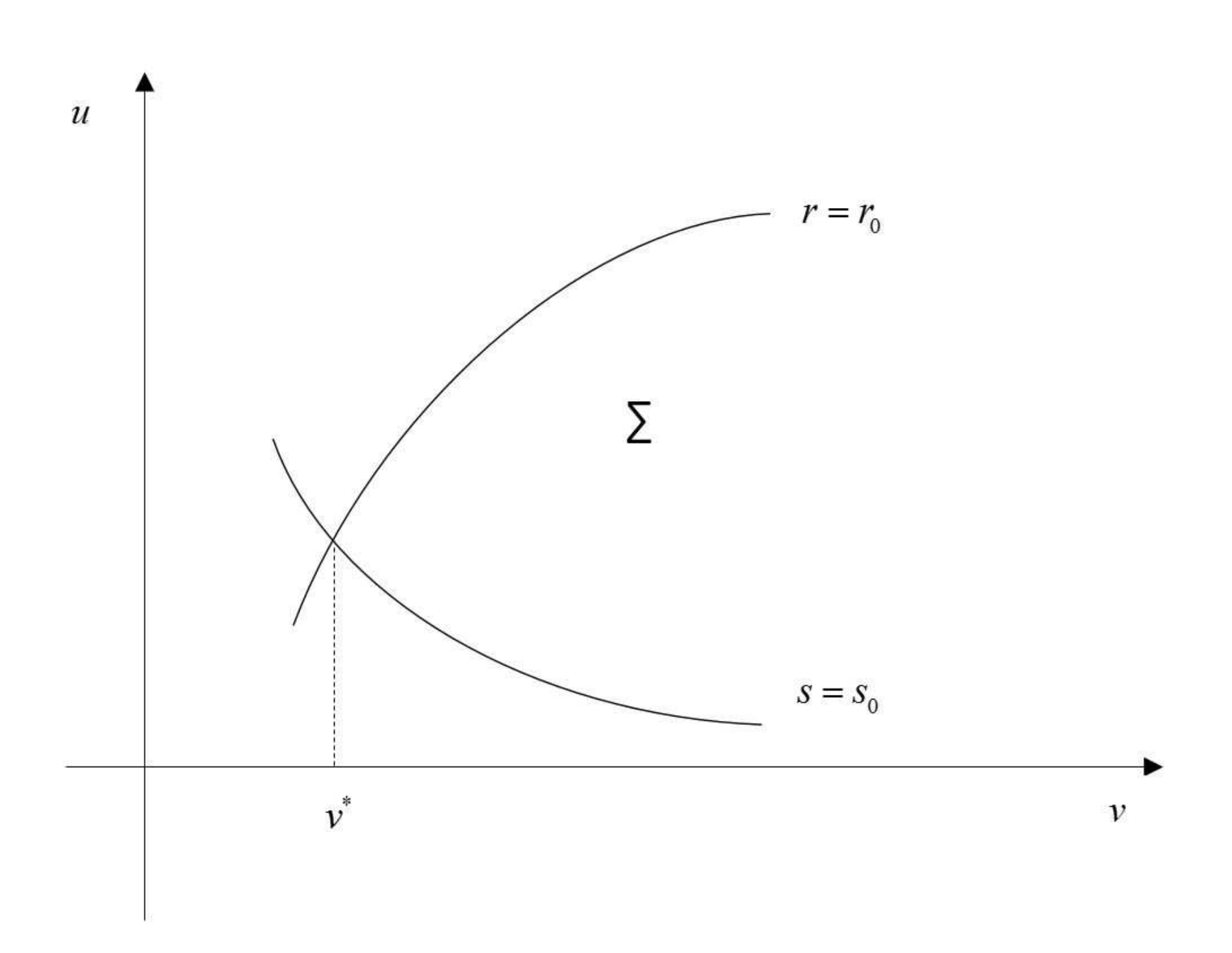}}
\caption{Invariant region for p-system}
\label{fig:IR}
\end{figure}

\begin{lem} \label{lem:1} Let $(v^*,u^*)^T$ be the intersection of upper and lower boundaries of $\Sigma$, then 
\begin{align}\label{vs}
0<v^*\leq m= \inf v_0(x).
\end{align}
\end{lem}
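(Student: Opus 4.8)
The plan is to exploit the two equations defining the intersection point together with the monotonicity of $g$ recorded in the excerpt. Since $(v^*,u^*)^\top$ lies on both level curves $r=r_0$ and $s=s_0$, we have $u^*-g(v^*)=r_0$ and $u^*+g(v^*)=s_0$; subtracting eliminates $u^*$ and yields
\begin{equation*}
g(v^*)=\frac{s_0-r_0}{2}.
\end{equation*}
Because $g$ is strictly increasing with $g(m)=\int_m^m\sqrt{-p'(\xi)}\,d\xi=0$, the asserted bound $v^*\le m$ is equivalent to $g(v^*)\le 0$, i.e.\ to the single scalar inequality $s_0\le r_0$. Thus the whole lemma reduces to comparing the two constants $r_0$ and $s_0$ fixed in \eqref{r0s0}.

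The crux is to establish $s_0\le r_0$, and the key observation is that, evaluated along a minimizing sequence for $v_0$, the velocity $u_0$ cancels. First I would choose $x_n$ with $v_0(x_n)\to m$, which exists by the definition $m=\inf_x v_0(x)$, so that $g(v_0(x_n))\to g(m)=0$ by continuity of $g$. From $r_0=\sup_x r$ and $s_0=\inf_x s$ we have, for every $n$,
\begin{equation*}
s_0\le u_0(x_n)+g(v_0(x_n)),\qquad r_0\ge u_0(x_n)-g(v_0(x_n)).
\end{equation*}
Adding these two inequalities makes the $u_0(x_n)$ terms cancel exactly, leaving $s_0-r_0\le 2g(v_0(x_n))$ for each $n$; letting $n\to\infty$ gives $s_0-r_0\le 0$. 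Together with the first step this yields $g(v^*)\le 0=g(m)$, and monotonicity of $g$ then forces $v^*\le m$.

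Finally, the lower bound $v^*>0$ follows because $g(v^*)=(s_0-r_0)/2$ is finite for admissible bounded data, whereas $g(0^+)=-\infty$ by \eqref{p0}; since $g$ maps $(0,\infty)$ increasingly onto a range containing this finite value, the preimage $v^*$ is strictly positive, which also re-confirms the existence of the intersection point noted after \eqref{rs}. I expect the inequality $s_0\le r_0$ to be the only genuinely nontrivial step: the subtlety is that $r$ and $s$ must not be tested at different spatial points, and the cancellation of $u_0$ works precisely because both are evaluated along the same sequence realizing $v_0\to m$, where $g\to 0$.
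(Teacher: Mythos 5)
Your proposal is correct and follows essentially the same route as the paper: both derive $g(v^*)=(s_0-r_0)/2$ from the intersection equations, obtain $s_0-r_0\le 2g(v_0(x))$ by testing $\sup r$ and $\inf s$ at a common point so that $u_0$ cancels, and then use the monotonicity of $g$ together with assumption \eqref{p0} for the lower bound $v^*>0$. The only cosmetic difference is that you pass to a minimizing sequence for $v_0$ to get $s_0\le r_0$ directly, whereas the paper concludes $v^*\le v_0(x)$ pointwise and then takes the infimum; the two are equivalent.
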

\begin{proof}  According to the definition of $r$ and $s$, we have
\begin{align*}
r_0=&\sup_x(u_0(x)-\int ^{v_0(x)}_m\sqrt{-p'(\xi)}d\xi)=u^*-\int ^{v^*}_m\sqrt{-p'(\xi)}d\xi,\\
s_0=&\inf _x(u_0(x)+\int ^{v_0(x)}_m\sqrt{-p'(\xi)}d\xi)=u^*+\int ^{v^*}_m\sqrt{-p'(\xi)}d\xi.
\end{align*}
Hence 
\begin{align*}
&\int ^{v^*}_m\sqrt{-p'(\xi)}d\xi=\frac{s_0-r_0}{2}\leq \int ^{v_0(x)}_m\sqrt{-p'(\xi)}d\xi, \quad \forall x.
\end{align*}
This implies that $v^* \leq v_0 (x)$ for all $x$, and  $v^*> 0$ in light of the assumption (\ref{p0}). 
\end{proof}

For any bounded interval $I$, we define the average of $\mathbf{w}(x)\doteq (v(x),u(x))^\top$ by 
\begin{align*}
\bar{\mathbf{w}}=\frac{1}{|I|}\int _I\mathbf{w}(x)dx,
\end{align*}
where $|I|$ is the measure of $I$. Such an averaging operator is a  contraction in the sense stated in the following result. 

\begin{lem}\label{lem:2}
Let $\mathbf{w}(x)=(v(x),u(x))^T$, where both $v(x)$ and $u(x)$ are non-trivial piecewise smooth functions. If $\mathbf{w}(x)\in \Sigma$ for all $x\in \mathbb{R}$, then $\bar{\mathbf{w}}\in \Sigma _0$ for any bounded interval $I$.
\end{lem}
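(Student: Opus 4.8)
The goal is to show that the averaged state lies in the open region $\Sigma_0$, i.e.\ that both strict inequalities $r(\bar{\mathbf{w}})<r_0$ and $s(\bar{\mathbf{w}})>s_0$ hold. The natural tool is Jensen's inequality, applied to the two Riemann invariants, whose convexity/concavity was recorded in \eqref{rs}: $r=u-g(v)$ is convex and $s=u+g(v)$ is concave on the phase space. First I would record the harmless non-strict bounds. Since $r$ is convex, Jensen's inequality gives
\begin{align*}
r(\bar{\mathbf{w}})\leq \frac{1}{|I|}\int_I r(\mathbf{w}(x))\,dx \leq r_0,
\end{align*}
where the last step uses $r(\mathbf{w}(x))\leq r_0$ for all $x$, a consequence of $\mathbf{w}(x)\in\Sigma$. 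Symmetrically, concavity of $s$ yields $s(\bar{\mathbf{w}})\geq \frac{1}{|I|}\int_I s(\mathbf{w}(x))\,dx\geq s_0$.

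Because $u$ enters $r$ and $s$ only linearly, both estimates reduce to the single scalar Jensen inequality for $g$,
\begin{align*}
\frac{1}{|I|}\int_I g(v(x))\,dx \leq g(\bar v),
\end{align*}
valid since $g$ is concave. Here $\bar v$ is well within the domain of $g$: on $\Sigma$ one has $2g(v)=s-r\geq s_0-r_0=2g(v^*)$, so $v\geq v^*>0$ by monotonicity of $g$ and Lemma~\ref{lem:1}, whence $\bar v\geq v^*>0$. Indeed $r(\bar{\mathbf{w}})=\bar u-g(\bar v)$ while its cell average equals $\bar u-\frac{1}{|I|}\int_I g(v)\,dx$, and similarly for $s$; the discrepancy between the two sides is in each case exactly the Jensen gap $g(\bar v)-\frac{1}{|I|}\int_I g(v)\,dx\geq 0$.

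The main obstacle, and the only real content, is upgrading these inequalities to strict ones, which is where the hypotheses $g''<0$ and the non-triviality of $\mathbf{w}$ enter. Since $g$ is strictly concave, the scalar Jensen inequality above is an equality only if $v(x)$ is constant; as $v$ is a non-trivial piecewise smooth function it is non-constant, so the Jensen gap is strictly positive, giving at once $r(\bar{\mathbf{w}})<r_0$ and $s(\bar{\mathbf{w}})>s_0$, hence $\bar{\mathbf{w}}\in\Sigma_0$. For completeness one may note that even were $v$ constant, the non-constancy of $u$ would force $r(\mathbf{w}(x))<r_0$ (or $s(\mathbf{w}(x))>s_0$) on a set of positive measure, again yielding strict averaged inequalities. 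I would close by remarking that the argument is insensitive to the specific form of $p$: it uses only the strict concavity of $g$, which is precisely the structural property encoded in \eqref{rs}.
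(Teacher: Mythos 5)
Your proof is correct and follows essentially the same route as the paper's: Jensen's inequality applied to the convex invariant $r$ and concave invariant $s$ gives the non-strict bounds, and the strict concavity of $g$ is then used to rule out equality for non-constant $v$ (the paper phrases this as a contradiction argument, showing that $r(\bar{\mathbf{w}})=r_0$ would force equality in the scalar Jensen inequality for $g$ and hence force $v$, and then $u=r_0+g(v)$, to be constant). Your explicit handling of the residual case where $v$ is constant but $u$ is not is a small completeness point that the paper's wording glosses over.
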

\begin{proof}
Since $r$ is convex, according to Jensen's inequality, we have
\begin{align*}
r(\bar{\mathbf{w}})=&r\left(\frac{1}{|I|}\int _{I}\mathbf{w}(x)\right)\leq \frac{1}{|I|}\int _{I}r(\mathbf{w}(x))dx\leq r_0.
\end{align*}
If $r(\bar{\mathbf{w}})=r_0$, then $r(\mathbf{w}(x))=r_0$ for all $x$. That is,
$$
\bar{u}- g(\bar v) =u(x)- g(v(x)).
$$
By taking cell average of this relation on both sides, we have
\begin{align*}
\bar{u}- g(\bar v)=\bar u - \frac{1}{|I|}\int_{I}g(v(x))dx.
\end{align*}
This gives 
\begin{align*}
\frac{1}{|I|}\int _{I}g(v(x))dx = g(\bar{v}).
\end{align*}
Since we have
\begin{align*}
g(\bar{v}) \geq  \frac{1}{|I|}\int _{I}g(v(x))dx
\end{align*}
implied by Jensen's inequality, we conclude that $v(x)$ must be a constant, so is $u(x)=r_0 +g(v(x))$.
Therefore, $r(\bar{\mathbf{w}})<r_0$ when $\mathbf{w}(x)$ is non-trivial. The proof of $s(\bar{\mathbf{w}})>s_0$ is entirely similar.
\end{proof}
\begin{rem} From given initial data  in $\Sigma$, the above result ensures that its cell average lies strictly within $\Sigma _0$.  On the other hand, the usual piecewise $L^2$ projection, used as initial data for numerical schemes, may not lie entirely in $\Sigma$, but it has the same cell average. Such cell average is the key ingredient in the construction of our IRP limiter.   
\end{rem}

\red{
In passing we present the  invariant regions of two other equations, which can be studied along the same lines. }

\red{1.  The one dimensional isentropic gas dynamic system in Euler coordinates, i.e.
\begin{equation}\label{IGDE}
\begin{aligned}
&\rho _t+(\rho u)_x=0,\\
&(\rho u)_t+(\rho u^2+p(\rho))_x=0,
\end{aligned}
\end{equation}
with initial condition $(\rho _0(x),m_0(x))$, where $p(\rho)=\rho ^{\gamma}$, $\gamma >1$ and $m =\rho u$. The two Riemann invariants are
\begin{align*}
r=u+\frac{2\sqrt{\gamma}}{\gamma-1}\rho^{(\gamma-1)/2}, \quad s=u-\frac{2\sqrt{\gamma}}{\gamma-1}\rho^{(\gamma-1)/2}.
\end{align*}
Hence 
\begin{align}\label{s1}
\Sigma_1 =\{(\rho ,m)^{\top}|\quad r\leq r_0, \quad s\geq s_0\} 
\end{align}
is the invariant region for system (\ref{IGDE}), where  $r_0$ and $s_0$ are chosen as 
\begin{align*}
r_0=\sup _x r(\rho _0(x),m_0(x)), \quad s_0=\inf _x s(\rho _0(x),m_0(x)).
\end{align*}
}

\red{2.  The one dimensional shallow water equations with a flat bottom topography  take the form 
\begin{equation}\label{IGDE+}
\begin{aligned}
&h_t+(h u)_x=0,\\
&(h u)_t+(h u^2+\frac{g}{2}h^2)_x=0,
\end{aligned}
\end{equation}
where $h$ is the fluid height, $u$ the fluid velocity, and $g$ is the acceleration due to gravity, subject to initial condition $(h_0(x),m_0(x))$, where  $m =h u$. The two Riemann invariants are
\begin{align*}
r=u+2\sqrt{gh}, \quad s=u-2\sqrt{gh}.
\end{align*}
Hence 
\begin{align}\label{s2}
\Sigma_2 =\{(h ,m)^{\top}|\quad r\leq r_0, \quad s\geq s_0\} 
\end{align}
is the invariant region for system (\ref{IGDE+}), where  $r_0$ and $s_0$ are chosen as 
\begin{align*}
r_0=\sup _xr(h _0(x),m_0(x)), \quad s_0=\inf _x s(h_0(x),m_0(x)).
\end{align*}
In contrast to the invariant region in Figure 1, which is open and unbounded,  both $ \Sigma_1$ and $\Sigma_2$ are closed and bounded regions in terms of $(\rho, m)$ and $(h, m)$,  respectively. 
}

\section{The IRP limiter}
\label{sec:3}
\subsection{A limiter to enforce the IRP property}
Let $\mathbf{w}_h(x)=(v_h(x),u_h(x))^\top $ be a vector of polynomials of degree $k$ over an interval $I$, which is a high order approximation to the smooth function $\mathbf{w}(x)=(v(x),u(x))^\top \in \Sigma$. We assume that the average $\bar{\mathbf{w}}_h\in \Sigma _0$, but $\mathbf{w}_h(x)$ is not entirely located in $\Sigma$. We then seek a modified polynomial using the cell average as a reference  to achieve three objectives: (i) the modified polynomial preserves the cell average,
(ii) it lies entirely within $\Sigma$, and (iii) the high order of accuracy is not destroyed.

The modification is through a linear convex combination, 
which is of the form
\begin{align}\label{bb}
\tilde{\mathbf{w}}_h(x)=\theta \mathbf{w}_h(x)+(1-\theta)\bar{\mathbf{w}}_h, 
\end{align}
 where  $\theta \in (0,1] $ is defined by 
\begin{align}\label{limiter}
\theta =\min \{1,\theta _1,\theta _2\},
\end{align}
with 
\begin{align*}
\theta _1=
\frac{r_0-r(\bar{\mathbf{w}}_h)}{r_{\text{max}}-r(\bar{\mathbf{w}}_h)}
, \quad \quad 
\theta _2=
\frac{s(\bar{\mathbf{w}}_h)-s_0}{s(\bar{\mathbf{w}_h})-s_{\text{min}}}
\end{align*}
and
\begin{align}\label{aa}
r_{\max} = \max _{x\in I}r(\mathbf{w}_h(x)), \quad s_{\min}=\min _{x\in I}s(\mathbf{w}_h(x)).
\end{align}
Notice that since $\bar{\mathbf{w}}_h\in \Sigma _0$, we have $r(\bar{\mathbf{w}}_h)<r_0$ and $s(\bar{\mathbf{w}}_h)>s_0$. Also $r(\bar{\mathbf{w}}_h)<r_{\max}$ and $s(\bar{\mathbf{w}}_h)>s_{\min}$ due to the convexity of $r$ and concavity of $s$, respectively. Both $\theta _1$ and $\theta _2$ are well-defined and positive.

We proceed to show (ii) in Lemma \ref{lem:3}, and prove the accuracy-preserving property (iii) rigorously in Lemma \ref{lem:5}. 

\begin{lem}\label{lem:3}
If $\bar{\mathbf{w}}_h \in \Sigma _0$, then $\tilde{\mathbf{w}}_h(x)\in \Sigma$, $\forall x\in I$.
\end{lem}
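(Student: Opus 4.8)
The plan is to verify the two defining inequalities of $\Sigma$ separately, namely $r(\tilde{\mathbf{w}}_h(x)) \le r_0$ and $s(\tilde{\mathbf{w}}_h(x)) \ge s_0$ for every $x \in I$, exploiting that $r$ is convex and $s$ is concave (recall \eqref{rs}). The whole construction of $\theta$ is calibrated precisely so that these one-sided bounds come out, so I expect no genuine obstacle; the proof is essentially a bookkeeping exercise matching a convexity estimate against the definition of the limiter parameter.

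First I would treat $r$. Writing $\tilde{\mathbf{w}}_h(x) = \theta \mathbf{w}_h(x) + (1-\theta)\bar{\mathbf{w}}_h$ as a convex combination of two points (since $\theta \in (0,1]$), convexity of $r$ gives
\begin{align*}
r(\tilde{\mathbf{w}}_h(x)) \le \theta\, r(\mathbf{w}_h(x)) + (1-\theta)\, r(\bar{\mathbf{w}}_h) \le \theta\, r_{\max} + (1-\theta)\, r(\bar{\mathbf{w}}_h),
\end{align*}
where the second inequality uses the definition \eqref{aa} of $r_{\max}$. It then suffices to check that the right-hand side is at most $r_0$. Rearranging, this is equivalent to $\theta\,(r_{\max} - r(\bar{\mathbf{w}}_h)) \le r_0 - r(\bar{\mathbf{w}}_h)$, i.e. $\theta \le \theta_1$, which holds because $\theta = \min\{1,\theta_1,\theta_2\}$. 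Here I would note that the denominator $r_{\max} - r(\bar{\mathbf{w}}_h)$ is strictly positive, as already recorded just after the definition of the limiter, so the rearrangement is legitimate.

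The bound for $s$ is entirely symmetric. Concavity of $s$ yields
\begin{align*}
s(\tilde{\mathbf{w}}_h(x)) \ge \theta\, s(\mathbf{w}_h(x)) + (1-\theta)\, s(\bar{\mathbf{w}}_h) \ge \theta\, s_{\min} + (1-\theta)\, s(\bar{\mathbf{w}}_h),
\end{align*}
and requiring the last expression to be at least $s_0$ reduces to $\theta\,(s(\bar{\mathbf{w}}_h) - s_{\min}) \le s(\bar{\mathbf{w}}_h) - s_0$, that is $\theta \le \theta_2$, which again follows from $\theta = \min\{1,\theta_1,\theta_2\}$. Combining the two, $\tilde{\mathbf{w}}_h(x)$ satisfies $r \le r_0$ and $s \ge s_0$ for all $x \in I$, hence $\tilde{\mathbf{w}}_h(x) \in \Sigma$. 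The only point warranting a line of care is confirming that both denominators are nonzero so that $\theta_1,\theta_2$ are well defined with the claimed sign, but this is guaranteed by the hypothesis $\bar{\mathbf{w}}_h \in \Sigma_0$ together with the strict inequalities $r(\bar{\mathbf{w}}_h) < r_{\max}$ and $s(\bar{\mathbf{w}}_h) > s_{\min}$ noted in the text.
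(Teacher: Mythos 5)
Your proof is correct and follows essentially the same route as the paper: convexity of $r$ and concavity of $s$ applied to the convex combination, matched against the definitions of $\theta_1$ and $\theta_2$. The only cosmetic difference is that you use $\theta\le\theta_1$ and $\theta\le\theta_2$ directly rather than splitting into the cases $\theta=\theta_1$ and $\theta=\theta_2$ as the paper does, which is a slight streamlining but not a different argument.
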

\begin{proof} We only need to discuss the case $\theta=\theta _1$ or $\theta_2$. 
If $\theta=\theta _1$, with the convexity of $r$, we have
\begin{align*}
r(\tilde{\mathbf{w}}_h(x))\leq &\theta_1 r(\mathbf{w}_h(x))+(1-\theta_1)r(\bar{\mathbf{w}}_h)\\
= &(r_0-r(\bar{\mathbf{w}}_h))\frac{r(\mathbf{w}_h(x))-r(\bar{\mathbf{w}}_h)}{r_{\max}-r(\bar{\mathbf{w}}_h)}+r(\bar{\mathbf{w}}_h)\leq r_0.
\end{align*}
On the other hand,   from $\theta_1 \leq \theta_2$ it follows that
\begin{align}
(s(\bar{\mathbf{w}}_h)-s_{\text{min}})\theta_1 \leq s(\bar{\mathbf{w}}_h) -s_0.
\end{align}
By the concavity of $s$, we have
\begin{align*}
s(\tilde{\mathbf{w}}_h(x)) \geq & \theta _1 s(\mathbf{w}_h(x))+(1-\theta _1)s(\bar{\mathbf{w}}_h)\\
\geq & \theta _1 (s_{\text{min}}-s(\bar{\mathbf{w}}_h)) + s(\bar{\mathbf{w}}_h)\\
\geq & s_0 -s(\bar{\mathbf{w}}_h)+s(\bar{\mathbf{w}}_h)=s_0.
\end{align*}
In the case that $\theta=\theta _2$, the proof is entirely similar. 
\end{proof}

Next we show that this limiter does not destroy the accuracy. We first prepare the following lemma. We adopt the notation $\| \cdot \|_{\infty}\doteq \max _{x\in I}|\cdot |$ in the rest of the paper.

\begin{lem}\label{lem:4}
If $\|\mathbf{w}_h-\mathbf{w}\|_\infty$ is sufficiently small, we have
\begin{itemize}
\item[(i)] $\| \mathbf{w}_h\| _{\infty}\leq \| \mathbf{w}\| _{\infty}+1$;\\
\item[(ii)] $\frac{v^*}{2}\leq v_h(x)\leq \|v\|_{\infty}+1, \quad \forall x\in I$;\\
\item[(iii)] $\| \triangledown r\|_{\infty}\leq \sqrt{1-p'(v^*/2)},  \quad g'(v) \geq \sqrt{-p'(\| v\|_{\infty}+1)}$.
\end{itemize}
\end{lem}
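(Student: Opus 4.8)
The plan is to dispatch the three assertions in turn, observing that once the structural fact $v(x)\ge v^*$ for points $\mathbf{w}(x)\in\Sigma$ is secured, each inequality is either a triangle-inequality estimate or a monotonicity argument for $-p'$. I would fix the meaning of ``sufficiently small'' at the outset by requiring $\|\mathbf{w}_h-\mathbf{w}\|_\infty\le\min\{1,v^*/2\}$. Then (i) is immediate from $\|\mathbf{w}_h\|_\infty\le\|\mathbf{w}_h-\mathbf{w}\|_\infty+\|\mathbf{w}\|_\infty\le\|\mathbf{w}\|_\infty+1$, and the upper bound in (ii) follows componentwise in the same way, $v_h(x)\le v(x)+|v_h(x)-v(x)|\le\|v\|_\infty+1$.

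For the lower bound in (ii), I would first establish that every point of $\Sigma$ satisfies $v\ge v^*$. This is read directly from the level-set description: for $(v,u)^\top\in\Sigma$ one has $s-r=2g(v)$ together with $s\ge s_0$ and $r\le r_0$, hence $g(v)\ge(s_0-r_0)/2$. The intersection relations from \lemref{lem:1} give exactly $g(v^*)=(s_0-r_0)/2$, so $g(v)\ge g(v^*)$, and since $g$ is increasing this yields $v\ge v^*$. Applying this to $\mathbf{w}(x)\in\Sigma$ gives $v(x)\ge v^*$, whence $v_h(x)\ge v(x)-|v_h(x)-v(x)|\ge v^*-v^*/2=v^*/2$. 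This single geometric step is the only part of the lemma that is not purely perturbative, and I expect it to be the main obstacle; everything else is bookkeeping.

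Finally, for (iii) I would compute $\triangledown r=(-g'(v),1)^\top=(-\sqrt{-p'(v)},\,1)^\top$ from $r=u-g(v)$, so that $|\triangledown r(\mathbf{w}_h(x))|=\sqrt{1-p'(v_h(x))}$ in the Euclidean norm. Because $p''>0$ makes $-p'$ decreasing, the bound $v_h(x)\ge v^*/2$ from (ii) gives $-p'(v_h(x))\le-p'(v^*/2)$, and taking the maximum over $x\in I$ produces $\|\triangledown r\|_\infty\le\sqrt{1-p'(v^*/2)}$. The companion inequality uses the same monotonicity from the other side: since $v\le\|v\|_\infty\le\|v\|_\infty+1$ one has $-p'(v)\ge-p'(\|v\|_\infty+1)$, hence $g'(v)=\sqrt{-p'(v)}\ge\sqrt{-p'(\|v\|_\infty+1)}$.
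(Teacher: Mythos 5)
Your proposal is correct and follows essentially the same route as the paper's (much terser) proof: triangle inequalities for (i) and the upper bound in (ii), the geometric fact $v\geq v^*$ on $\Sigma$ for the lower bound, and monotonicity of $-p'$ (from $p''>0$) for (iii). You simply supply the details the paper leaves implicit, in particular the derivation of $v\geq v^*$ from $g(v)\geq (s_0-r_0)/2=g(v^*)$ via Lemma~\ref{lem:1}.
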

\begin{proof}
The statements (i) and (ii) follow from the assumption and the fact that $v^*\leq v(x)\leq  \|v\|_\infty$, for all $x\in I$.
And (iii) results from the facts that $\| \triangledown r\|_2=\sqrt{1-p'(v)}$ is decreasing and $g'(v)=\sqrt{-p'(v)}$ is decreasing, with respect to $v$.
\end{proof}

\begin{lem}\label{lem:5}
The reconstructed polynomial preserves high order accuracy, i.e.
\begin{align}\label{acc}
\| \tilde{\mathbf{w}}_h-\mathbf{w}\| _{\infty} \leq C \| \mathbf{w}_h-\mathbf{w}\| _{\infty},
\end{align}
where $C>0$ depends on $\mathbf{w}$ and $\Sigma$, also on $\bar{\mathbf{w}}_h$. 
\end{lem}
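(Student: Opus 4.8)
The plan is to exploit the convex-combination structure of the limiter and reduce the whole estimate to showing that $1-\theta$ is of the same order as the approximation error. Writing $\varepsilon:=\|\mathbf{w}_h-\mathbf{w}\|_\infty$ and subtracting $\mathbf{w}$ from \eqref{bb}, I would first record the identity $\tilde{\mathbf{w}}_h-\mathbf{w}=\theta(\mathbf{w}_h-\mathbf{w})+(1-\theta)(\bar{\mathbf{w}}_h-\mathbf{w})$, so that by the triangle inequality and $\theta\in(0,1]$,
$$\|\tilde{\mathbf{w}}_h-\mathbf{w}\|_\infty\le \varepsilon+(1-\theta)\|\bar{\mathbf{w}}_h-\mathbf{w}\|_\infty.$$
Since $\mathbf{w}$ is bounded and $\bar{\mathbf{w}}_h$ is a fixed constant state, the factor $\|\bar{\mathbf{w}}_h-\mathbf{w}\|_\infty$ is dominated by a constant $M$ depending only on $\mathbf{w}$ and $\bar{\mathbf{w}}_h$. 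Hence the lemma reduces to the single estimate $1-\theta\le C'\varepsilon$, after which one takes $C=1+C'M$.

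Next, since $1-\theta=1-\min\{1,\theta_1,\theta_2\}=\max\{0,\,1-\theta_1,\,1-\theta_2\}$, it suffices to bound $1-\theta_1$ and $1-\theta_2$ separately in the cases where they are positive. A direct computation from \eqref{limiter} gives
$$1-\theta_1=\frac{r_{\max}-r_0}{r_{\max}-r(\bar{\mathbf{w}}_h)},\qquad 1-\theta_2=\frac{s_0-s_{\min}}{s(\bar{\mathbf{w}}_h)-s_{\min}},$$
so each is a ratio of a ``boundary-overshoot'' numerator to a ``distance-to-average'' denominator. I would control the numerators using the crucial fact that the exact state lies in $\Sigma$: because $r(\mathbf{w}(x))\le r_0$ and $s(\mathbf{w}(x))\ge s_0$ for all $x$, evaluating at the point $x^*$ where $r(\mathbf{w}_h)$ attains its maximum gives $r_{\max}-r_0\le r(\mathbf{w}_h(x^*))-r(\mathbf{w}(x^*))$, and symmetrically for $s$ at the minimizing point. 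The mean-value theorem together with the Lipschitz bound $\|\triangledown r\|_\infty\le\sqrt{1-p'(v^*/2)}$ from Lemma \ref{lem:4}(iii) then yields $r_{\max}-r_0\le L\varepsilon$ and $s_0-s_{\min}\le L\varepsilon$, with $L=\sqrt{1-p'(v^*/2)}$ a constant depending only on $\mathbf{w}$ and $\Sigma$.

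The denominators are where the hypothesis $\bar{\mathbf{w}}_h\in\Sigma_0$ is essential, and this is the main point (and the only delicate point) of the argument. Because the averaged state is \emph{strictly} interior, $r_0-r(\bar{\mathbf{w}}_h)>0$ and $s(\bar{\mathbf{w}}_h)-s_0>0$ are fixed positive numbers. In the binding case $r_{\max}>r_0$ one has $r_{\max}-r(\bar{\mathbf{w}}_h)>r_0-r(\bar{\mathbf{w}}_h)$, giving $1-\theta_1\le L\varepsilon/(r_0-r(\bar{\mathbf{w}}_h))$; likewise $1-\theta_2\le L\varepsilon/(s(\bar{\mathbf{w}}_h)-s_0)$. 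Taking $C'$ to be the larger of these two coefficients establishes $1-\theta\le C'\varepsilon$ and closes the proof. The hard part is precisely ensuring these denominators stay bounded away from zero, which is exactly why $C$ is allowed to depend on $\bar{\mathbf{w}}_h$: if the cell average were permitted to touch $\partial\Sigma$ the lower bounds would degenerate and the estimate would fail, consistent with the accuracy-loss example promised in Appendix B.
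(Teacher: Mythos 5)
Your proof is correct, but it takes a genuinely different and more elementary route than the paper. You split the estimate as $\|\tilde{\mathbf{w}}_h-\mathbf{w}\|_\infty\le \|\mathbf{w}_h-\mathbf{w}\|_\infty+(1-\theta)\|\bar{\mathbf{w}}_h-\mathbf{w}\|_\infty$ and then bound the two factors of the second term \emph{separately}: the overshoot estimate $r_{\max}-r_0\le \|\triangledown r\|_\infty\|\mathbf{w}_h-\mathbf{w}\|_\infty$ (identical to the paper's Step 2) controls the numerator of $1-\theta_1=\frac{r_{\max}-r_0}{r_{\max}-r(\bar{\mathbf{w}}_h)}$, the denominator is bounded below by the fixed distance $r_0-r(\bar{\mathbf{w}}_h)>0$ in the binding case, and $\|\bar{\mathbf{w}}_h-\mathbf{w}\|_\infty$ is absorbed into a constant $M$. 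This completely bypasses the paper's Steps 3--6 (Gauss quadrature decomposition of the cell average, Lagrange interpolation and the Lebesgue constant, the change of variables to Riemann invariants, and the Jensen/quadrature-weight argument), and since the lemma explicitly allows $C$ to depend on $\bar{\mathbf{w}}_h$, your argument does establish the stated inequality. What the paper's longer route buys is a more structured constant: it keeps the product $(1-\theta_1)\|\bar{\mathbf{w}}_h-\mathbf{w}_h\|_\infty$ together and bounds the ratio $\|\bar{\mathbf{w}}_h-\mathbf{w}_h\|_\infty/(r_{\max}-r(\bar{\mathbf{w}}_h))$, so that the only dependence on $\bar{\mathbf{w}}_h$ enters through the \emph{ratio} $C_4=2\max\{1,\frac{s(\bar{\mathbf{w}}_h)-s_0}{r_0-r(\bar{\mathbf{w}}_h)},\frac{r_0-r(\bar{\mathbf{w}}_h)}{s(\bar{\mathbf{w}}_h)-s_0}\}$ of the two boundary distances rather than through their reciprocals. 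This refinement is exactly what Appendix B analyzes (e.g.\ its Example 1 has both distances shrinking like $h^2$ so $C_4=1$ stays uniformly bounded, a conclusion your reciprocal-distance constant $1/(r_0-r(\bar{\mathbf{w}}_h))$ does not make transparent, though in that example your bound is rescued because $M$ is also $O(h^2)$). In short: your proof is valid and shorter; the paper's yields a sharper, more interpretable constant that supports its subsequent discussion of when accuracy can deteriorate.
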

\begin{proof}
Consider the case $\theta=\theta _1 \leq  \theta_2$. We only need to prove  
\begin{align}\label{acc+}
\| \tilde{\mathbf{w}}_h-\mathbf{w}_h\| _{\infty}  \leq C \| {\mathbf{w}}_h-\mathbf{w}\| _{\infty},
\end{align}
from which (\ref{acc}) follows by further using the triangle inequality.    We proceed to prove (\ref{acc+}) in several steps. 

\underline{Step 1.}  From (\ref{bb}), it follows that
\begin{align*}
\| \tilde{\mathbf{w}}_h-\mathbf{w}_h\| _{\infty}=&(1-\theta _1)\|\bar{\mathbf{w}}_h-\mathbf{w}_h\|_{\infty}\\
=&\left(1-\frac{r_0-r(\bar{\mathbf{w}}_h)}{r_{\max}-r(\bar{\mathbf{w}}_h)} \right)\|\bar{\mathbf{w}}_h-\mathbf{w}_h \|_{\infty}\\
=&\frac{\max \limits _{x\in I} |\bar{\mathbf{w}}_h-\mathbf{w}_h(x)|}{r_{\max}-r(\bar{\mathbf{w}}_h)}(r_{\max}-r_0).
\end{align*}

\underline{Step 2.} The overshoot estimate. Since $\mathbf{w}(x)\in \Sigma$, 
\begin{align*}
r_{\max}-r_0 & \leq \max _{x\in I}\left(r(\mathbf{w}_h(x))-r(\mathbf{w}(x))\right)\\
& \leq \|\triangledown r\| _{\infty}\|\mathbf{w}_h-\mathbf{w}\|_{\infty} \leq C_1\|\mathbf{w}_h-\mathbf{w}\|_{\infty},
\end{align*}
where $C_1=\sqrt{1-p'(v^*/2)}$ by Lemma \ref{lem:4}.\\

\underline{Step 3.} Let $\hat{x}^{\alpha}$ ($\alpha=1,\cdots ,k+1$) be  the Gauss quadrature points such that 
\begin{align}\label{gq}
\bar{\mathbf{w}}_h=\sum ^{k+1}_{\alpha =1}\hat{w}_{\alpha}\mathbf{w}_h(\hat{x}^{\alpha}), 
\end{align}
with $\hat{w}_{\alpha}>0$ and $\sum ^{k+1}_{\alpha =1}\hat{w}_{\alpha}=1$. Using these quadrature points, 
we also have 
$$
\mathbf{w}_h(x)-\bar{\mathbf{w}}_h=\sum ^{k+1}_{\alpha =1}
(\mathbf{w}_h(\hat{x}^{\alpha})-\bar{\mathbf{w}}_h)l_{\alpha}(\xi), \quad \xi =(x-a)/(b-a), 
$$
where $l_{\alpha}(\xi)$ ($\alpha=1,\cdots ,N$) are  the Lagrange interpolating polynomials.
Hence, we have
\begin{align*}
\max \limits _{x\in I} |\bar{\mathbf{w}}_h-\mathbf{w}_h(x)|\leq &\max _{\xi \in [0, 1]}\sum \limits^{k+1}_{\alpha =1}|l_{\alpha}(\xi)||\bar{\mathbf{w}}_h-\mathbf{w}_h(\hat{x}^{\alpha})|\\
\leq &C_2\max _{\alpha}|\bar{\mathbf{w}}_h-\mathbf{w}_h(\hat{x}^{\alpha})|, 
\end{align*}  
where $C_2=\Lambda _{k+1}([0, 1])\doteq\max \limits_{\xi \in [0, 1]}\sum \limits^{k+1}_{\alpha =1}|l_{\alpha}(\xi)|$ is the Lebesgue constant.

\underline{Step 4.} Change of variables. Since $g(v)$ is increasing and concave, we have 
\begin{align*}
u_h(x)& =\frac{r(\mathbf{w}_h(x))+s(\mathbf{w}_h(x))}{2}, \quad \quad v_h(x)=g^{-1}\left(\frac{s(\mathbf{w}_h(x))-r(\mathbf{w}_h(x))}{2}\right), \\
\bar{u}_h(x)& =\frac{r(\bar{\mathbf{w}}_h(x))+s(\bar{\mathbf{w}}_h(x))}{2}, \quad \quad \bar{v}_h(x)=g^{-1}\left(\frac{s(\bar{\mathbf{w}}_h(x))-r(\bar{\mathbf{w}}_h(x))}{2}\right).
\end{align*}
If we set 
$$
E=\max \limits _{\alpha}|r(\bar{\mathbf{w}}_h)-r(\mathbf{w}_h(\hat{x}^{\alpha}))|+\max \limits _{\alpha}|s(\bar{\mathbf{w}}_h)-s(\mathbf{w}_h(\hat{x}^{\alpha}))|,
$$
then 
\begin{align*}
\max _{\alpha}|\bar{\mathbf{w}}_h-\mathbf{w}_h(\hat{x}^{\alpha})|
& \leq  \sqrt{ \max \limits _{\alpha}|\bar{u}_h-u_h(\hat{x}^{\alpha})|^2+ \max \limits _{\alpha}|\bar{v}_h-v_h(\hat{x}^{\alpha})|^2}\\ 
& \leq \frac{1}{2}\sqrt{1+ \| (g^{-1})'(\cdot)\|^2_{\infty}} E \\
& \leq C_3 E,
\end{align*}
where $C_3=\frac{1}{2} \sqrt{1-\frac{1}{p'(\| v\|_{\infty}+1)}}$ by Lemma \ref{lem:4}.\\

Step 5. We are now left to show the uniform bound of
\begin{align*}
\frac{E}{r_{\max}-r(\bar{\mathbf{w}}_h)}=\frac{\max \limits _{\alpha}|r(\bar{\mathbf{w}}_h)-r(\mathbf{w}_h(\hat{x}^{\alpha}))|+\max \limits _{\alpha}|s(\bar{\mathbf{w}}_h)-s(\mathbf{w}_h(\hat{x}^{\alpha}))|}{r_{\max}-r(\bar{\mathbf{w}}_h)},
\end{align*}
which is equivalent to show the boundedness of
\begin{align*}
\max \left\lbrace \frac{\hat{r}_{\max}-r(\bar{\mathbf{w}}_h)}{r_{\max}-r(\bar{\mathbf{w}}_h)}, \frac{r(\bar{\mathbf{w}}_h)-\hat{r}_{\min}}{r_{\max}-r(\bar{\mathbf{w}}_h)} \right\rbrace+\max \left\lbrace \frac{s(\bar{\mathbf{w}}_h)-\hat{s}_{\min}}{r_{\max}-r(\bar{\mathbf{w}}_h)},\frac{\hat{s}_{\max}-s(\bar{\mathbf{w}}_h)}{r_{\max}-r(\bar{\mathbf{w}}_h)} \right\rbrace,
\end{align*}
where
\begin{align*}
& \hat{r}_{\max}\doteq \max \limits_{\alpha}r(\mathbf{w}_h(\hat{x}^{\alpha})), \quad \hat{r}_{\min}\doteq \min \limits_{\alpha}r(\mathbf{w}_h(\hat{x}^{\alpha})), \\
&  \hat{s}_{\max}\doteq \max \limits_{\alpha}s(\mathbf{w}_h(\hat{x}^{\alpha})), \quad \hat{s}_{\min}\doteq \min \limits_{\alpha}s(\mathbf{w}_h(\hat{x}^{\alpha})).
\end{align*}
Recall that $\theta= \theta _1\leq \theta _2$, therefore
\begin{align*}
\frac{r_0-r(\bar{\mathbf{w}}_h)}{r_{\max}-r(\bar{\mathbf{w}}_h)}\leq \frac{s(\bar{\mathbf{w}}_h)-s_0}{s(\bar{\mathbf{w}}_h)-s_{\min}} \quad \Rightarrow \quad 
\frac{s(\bar{\mathbf{w}}_h)-s_{\min}}{r_{\max}-r(\bar{\mathbf{w}}_h)}\leq \frac{s(\bar{\mathbf{w}}_h)-s_0}{r_0-r(\bar{\mathbf{w}_h})}.
\end{align*}
Then
\begin{align*}
&\max \left\lbrace\frac{\hat{r}_{\max}-r(\bar{\mathbf{w}}_h)}{r_{\max}-r(\bar{\mathbf{w}}_h)}, \frac{r(\bar{\mathbf{w}}_h)-\hat{r}_{\min}}{r_{\max}-r(\bar{\mathbf{w}}_h)} \right\rbrace +\max \left\lbrace\frac{s(\bar{\mathbf{w}}_h)-\hat{s}_{\min}}{r_{\max}-r(\bar{\mathbf{w}}_h)},\frac{\hat{s}_{\max}-s(\bar{\mathbf{w}}_h)}{r_{\max}-r(\bar{\mathbf{w}}_h)} \right\rbrace \\
\leq &C_4\left(\max \left\lbrace 1, \frac{r(\bar{\mathbf{w}}_h)-\hat{r}_{\min}}{r_{\max}-r(\bar{\mathbf{w}}_h)} \right\rbrace+\max \left\lbrace 1,\frac{\hat{s}_{\max}-s(\bar{\mathbf{w}}_h)}{s(\bar{\mathbf{w}}_h)-s_{\min}}\right\rbrace \right),
\end{align*}
where $C_4=2\max \{1,\frac{s(\bar{\mathbf{w}}_h)-s_0}{r_0-r(\bar{\mathbf{w}}_h)}\}$. Note that for the case ${\theta}=\theta _2$, $C_4$ needs to be replaced by $2\max\{1,\frac{r_0-r(\bar{\mathbf{w}}_h)}{s(\bar{\mathbf{w}}_h)-s_0}\}$.\\

Step 6. Note that (\ref{gq}) when combined with convexity of $r$ yields 
\begin{align*}
&r(\bar{\mathbf{w}}_h)=r\left(\sum \limits^{k+1}_{\alpha =1}\hat{w}_{\alpha}\mathbf{w}_h(\hat{x}^{\alpha})\right)\leq \sum \limits ^{k+1}_{\alpha =1}\hat{w}_{\alpha}r(\mathbf{w}_h(\hat{x}^{\alpha})).
\end{align*}
Assume $\hat{r}_{\min}$ is achieved at $\hat{x}^1$, then  
\begin{align*}
&\hat{w}_1(r(\bar{\mathbf{w}}_h)-\hat{r}_{\min})\leq \sum \limits^{k+1}_{\alpha =2}\hat{w}_{\alpha}(r(\mathbf{w}_h(\hat{x}^{\alpha}))-r(\bar{\mathbf{w}}_h))\leq (r_{\max}-r(\bar{\mathbf{w}}_h))\sum \limits ^{k+1}_{\alpha =2}\hat{w}_{\alpha}\\
\Rightarrow \quad &\frac{r(\bar{\mathbf{w}}_h)-\hat{r}_{\min}}{r_{\max}-r(\bar{\mathbf{w}}_h)}\leq \frac{1-\min \limits _{\alpha}\hat{w}_{\alpha}}{\min \limits_{\alpha}\hat{w}_{\alpha}}.
\end{align*}
Similarly, we can show that $\frac{\hat{s}_{\max}-s(\bar{\mathbf{w}}_h)}{s(\bar{\mathbf{w}}_h)-s_{\min}}\leq  \frac{1-\min \limits _{\alpha}\hat{w}_{\alpha}}{\min \limits_{\alpha}\hat{w}_{\alpha}}$, due to the concavity of $s$. 

The above steps together have verified the claimed estimate (\ref{acc+}),  with the bounding constant $C=\prod ^5_{i=1}C_i$, where 
\begin{equation}\label{c4}
C_4=2\max \left\{1,\frac{s(\bar{\mathbf{w}}_h)-s_0}{r_0-r(\bar{\mathbf{w}}_h)}, \frac{r_0-r(\bar{\mathbf{w}}_h)}{s(\bar{\mathbf{w}}_h)-s_0} \right\}, \quad C_5\doteq 2\max\left\{1, \frac{1-\min \limits _{\alpha}\hat{w}_{\alpha}}{\min \limits_{\alpha}\hat{w}_{\alpha}}\right\}.
\end{equation}
\end{proof}

\begin{rem}
It is noticed that when $\bar{\mathbf{w}}_h$ is close enough to the boundary of $\Sigma$, the constant $C_4$ can become quite large, indicating the possibility of accuracy deterioration in some cases. In appendix \ref{appsec:C4}, we present two examples to show that the magnitude of $C_4$ can be uniformly bounded or quite large.
\end{rem}
\begin{rem} \red{Our proof is the first attempt at rigorous justification of the accuracy-preservation by the IRP limiter for system case.}  Related techniques to Step 3 and Step 6 are also found in the proof of Lemma 7 \cite{Zh17} for scalar conservation laws, where the proof was accredited to Mark Ainsworth. An alternative proof for the accuracy of the limiter is given in \cite{LYH14}, in which the authors consider some weighted averages for scalar Fokker-Planck equations.  
\end{rem}

In summary, we have the following result.

\begin{thm}
\label{BigThm}
Let $\mathbf{w}_h(x)=(v_h(x),u_h(x))^T$ be a polynomial approximation to the smooth function $\mathbf{w}(x)=(v(x),u(x))^T\in \Sigma$, over bounded cell $I$, and $\bar{\mathbf{w}}_h \in \Sigma _0$. Define $r_{\text{max}} = \max \limits_{x\in I } r(\mathbf{w}_h(x))$, $s_{\text{min}} =\min \limits_{x\in I}s(\mathbf{w}_h(x))$. Then the modified polynomial
\begin{align*}
\tilde{\mathbf{w}}_h(x)={\theta}\mathbf{w}_h(x)+(1-{\theta})\bar{\mathbf{w}}_h, \quad {\theta}=\min \{1, \theta _1,\theta _2\},
\end{align*}
where 
\begin{align*}
\theta _1=\frac{r_0-r(\bar{\mathbf{w}}_h)}{r_{\text{max}}-r(\bar{\mathbf{w}}_h)}, \quad \quad \theta _2=\frac{s(\bar{\mathbf{w}}_h)-s_0}{s(\bar{\mathbf{w}}_h)-s_{\text{min}}},
\end{align*}
satisfies the following three properties:
\begin{enumerate}
\item the cell average is preserved, i.e. $\bar{\mathbf{w}}_h=\bar{\tilde{\mathbf{w}}}_h$;
\item it entirely lies within invariant region $\Sigma$, i.e. $r(\tilde{\mathbf{w}}_h)\leq r_0$, $s(\tilde{\mathbf{w}}_h)\geq s_0$;
\item high order of accuracy is maintained, i.e. $\| \tilde{\mathbf{w}}_h-\mathbf{w}\| _{\infty} \leq C \| {\mathbf{w}}_h-\mathbf{w}\| _{\infty}$,
where $C$ is a positive constant that only depends on $\bar{\mathbf{w}}_h, \mathbf{w}$, and the invariant region $\Sigma$.
\end{enumerate}
\end{thm}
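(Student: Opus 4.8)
The plan is to verify the three listed properties in turn, observing that this theorem consolidates work already done: it is the packaging of Lemma~\ref{lem:3} (property~2) and Lemma~\ref{lem:5} (property~3) together with one elementary averaging identity (property~1). Throughout I would exploit that $\theta$ is a single scalar independent of $x$, chosen as $\min\{1,\theta_1,\theta_2\}$, and that $\bar{\mathbf{w}}_h\in\Sigma_0$ guarantees $r(\bar{\mathbf{w}}_h)<r_0$, $s(\bar{\mathbf{w}}_h)>s_0$ and, by convexity of $r$ and concavity of $s$, $r(\bar{\mathbf{w}}_h)<r_{\max}$, $s(\bar{\mathbf{w}}_h)>s_{\min}$. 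Hence $\theta_1,\theta_2>0$ and $\theta\in(0,1]$ are well defined.

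For property~(1), since $\theta$ does not depend on $x$, averaging the convex combination $\tilde{\mathbf{w}}_h=\theta\mathbf{w}_h+(1-\theta)\bar{\mathbf{w}}_h$ over $I$ returns $\theta\bar{\mathbf{w}}_h+(1-\theta)\bar{\mathbf{w}}_h=\bar{\mathbf{w}}_h$, because the average of the constant $\bar{\mathbf{w}}_h$ is itself. This step is immediate and uses no structure of $\Sigma$. For property~(2), I would split into $\theta=1$ and $\theta\in\{\theta_1,\theta_2\}$. If $\theta=1$, then $\theta_1\geq1$ and $\theta_2\geq1$, which by the definitions force $r_{\max}\leq r_0$ and $s_{\min}\geq s_0$; thus $\mathbf{w}_h(x)=\tilde{\mathbf{w}}_h(x)\in\Sigma$ already for every $x$. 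Otherwise one is exactly in the situation of Lemma~\ref{lem:3}: assuming $\theta=\theta_1\leq\theta_2$, convexity of $r$ gives $r(\tilde{\mathbf{w}}_h)\leq\theta_1 r(\mathbf{w}_h)+(1-\theta_1)r(\bar{\mathbf{w}}_h)\leq r_0$ after inserting the formula for $\theta_1$, while concavity of $s$ combined with $\theta_1\leq\theta_2$ yields $s(\tilde{\mathbf{w}}_h)\geq s_0$; the case $\theta=\theta_2$ is symmetric.

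Property~(3) is the substantive part and reproduces Lemma~\ref{lem:5}. The chain of estimates I would carry out, taking $\theta=\theta_1\leq\theta_2$, is: first write $\|\tilde{\mathbf{w}}_h-\mathbf{w}_h\|_\infty=(1-\theta_1)\|\bar{\mathbf{w}}_h-\mathbf{w}_h\|_\infty$ and factor out $(r_{\max}-r_0)/(r_{\max}-r(\bar{\mathbf{w}}_h))$; then bound the overshoot $r_{\max}-r_0\leq C_1\|\mathbf{w}_h-\mathbf{w}\|_\infty$ using $\mathbf{w}(x)\in\Sigma$ and the gradient bound $\|\nabla r\|_\infty\leq\sqrt{1-p'(v^*/2)}$ from Lemma~\ref{lem:4}; next pass from the supremum over $I$ to the maximum over the Gauss nodes through the Lebesgue constant $C_2$; then convert $\mathbf{w}$-differences into $(r,s)$-differences via $u=(r+s)/2$, $v=g^{-1}((s-r)/2)$, where the bound on $(g^{-1})'$ from Lemma~\ref{lem:4} produces $C_3$; and finally control the ratio $E/(r_{\max}-r(\bar{\mathbf{w}}_h))$. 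Collecting $C=\prod_{i=1}^5 C_i$ and applying the triangle inequality between $\tilde{\mathbf{w}}_h$, $\mathbf{w}_h$ and $\mathbf{w}$ delivers $\|\tilde{\mathbf{w}}_h-\mathbf{w}\|_\infty\leq C\|\mathbf{w}_h-\mathbf{w}\|_\infty$.

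The last step is where I expect the main obstacle, and it decomposes into two independent pieces. One uses $\theta_1\leq\theta_2$ to trade the $s$-spread for the $r$-gap, via the implication $s(\bar{\mathbf{w}}_h)-s_{\min}\leq (r_{\max}-r(\bar{\mathbf{w}}_h))\,(s(\bar{\mathbf{w}}_h)-s_0)/(r_0-r(\bar{\mathbf{w}}_h))$, producing $C_4$. The other is a Jensen-type argument at the Gauss nodes: placing the node attaining $\hat r_{\min}$ first and using $r(\bar{\mathbf{w}}_h)\leq\sum_\alpha\hat w_\alpha r(\mathbf{w}_h(\hat x^\alpha))$ bounds $(r(\bar{\mathbf{w}}_h)-\hat r_{\min})/(r_{\max}-r(\bar{\mathbf{w}}_h))$ by $(1-\min_\alpha\hat w_\alpha)/\min_\alpha\hat w_\alpha$, with the analogous concavity estimate for $s$, giving $C_5$. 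The delicate point worth flagging is that $C_4$ is uniform only when $\bar{\mathbf{w}}_h$ stays away from $\partial\Sigma$; as the cell average approaches a boundary side, $C_4$ can blow up, which is exactly the caveat recorded in the remark following Lemma~\ref{lem:5} and illustrated by the example in the appendix.
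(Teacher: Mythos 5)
Your proposal is correct and follows essentially the same route as the paper: property (1) is the immediate averaging identity, property (2) is exactly Lemma~\ref{lem:3} (with the $\theta=1$ case noted), and property (3) reproduces the six-step argument of Lemma~\ref{lem:5} with the same constants $C_1,\dots,C_5$ and the same caveat about $C_4$ degenerating near $\partial\Sigma$. Nothing is missing.
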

\begin{rem} \red{The IRP limiter (\ref{bb})--(\ref{aa}) remains valid for $\Sigma_1$ in (\ref{s1}) and $\Sigma_2$ in (\ref{s2}).  The analysis above and the accuracy results in Theorem \ref{BigThm} can be easily generalized to both the one-dimensional isentropic gas dynamic system in Eulerian coordinates (\ref{IGDE}) and the shallow water equations (\ref{IGDE+}). }  
\end{rem}

To show when this simple limiter can be incorporated into an existing high order numerical scheme,  we present the following IRP algorithm.  
\subsection{Algorithm}\label{al}  
Let $\mathbf{w}^n_h$ be the numerical solution generated from a high order finite-volume-type scheme of an abstract form
\begin{equation}\label{abstsche}
\mathbf{w}^{n+1}_h=\mathcal{L}(\mathbf{w}^n_h),
\end{equation}
where $\mathbf{w}^n_h=\mathbf{w}^n_h(x)\in V_h$, and $V_h$ is a finite element space of piecewise polynomials of degree $k$ in each computational cell, i.e.,  
\begin{align}\label{Vh}
V_{h}=\{\upsilon:~\upsilon |_{ I_{j}}\in
 \mathbb{P}^{k}(I_{j})\}.
\end{align}
Assume $\lambda =\frac{\Delta t}{h}$ is the mesh ratio, where $h=\max \limits_j |I_j|$.
Provided that scheme (\ref{abstsche}) has the following property: there exists $\lambda _0$, and a test set $S$ such that if
\begin{equation}\label{suff}
\lambda \leq \lambda _0 \quad {\rm and} \quad  \mathbf{w}^n_h(x)\in {\Sigma} \quad {\rm for}\quad  x\in S
\end{equation}
then
\begin{equation}\label{prop}
\bar{\mathbf{w}}^{n+1}_h\in \Sigma _0; 
\end{equation}
 the limiter can then be applied with $I$ replaced by $S_I:=S\cap I_j$ in (\ref{aa}), i.e., 
\begin{align}
\label{defqp}
r_{\text{max}} = \max_{x\in S_I } r(\mathbf{w}_h(x)), \quad s_{\text{min}} =\min_{x\in S_I}s(\mathbf{w}_h(x)).
\end{align}
The algorithm can be described as follows: 

\textbf{Step 1.} Initialization: take the piecewise $L^2$ projection of $\mathbf{w}_0$ onto $V_h$, such that
\begin{equation}\label{L2Projection}
\int_{I_j} (\mathbf{w}^0_h(x)-\mathbf{w}_0(x))\phi(x)dx =0, \quad \forall \phi \in V_h.
\end{equation}
Also from $\mathbf{w}_0$, we compute $r_0$ and $s_0$ as defined in (\ref{r0s0}) to determine the invariant region $\Sigma$.

\textbf{Step 2.} Impose the modified limiter (\ref{bb}), (\ref{limiter}) with (\ref{defqp}) on $\mathbf{w}^n_h$  for $n=0, 1, \cdots $ to obtain $\tilde{\mathbf{w}}^n_h$.

\textbf{Step 3.} Update by the scheme:
\begin{equation}
\mathbf{w}^{n+1}_h=\mathcal{L}(\tilde{\mathbf{w}}^n_h).
\end{equation}
Return to \textbf{Step 2}.

\begin{rem}With the modification defined in (\ref{defqp}), both (1) and (3) in Theorem \ref{BigThm} remain valid, and the modified polynomials 
 lie within the invariant region $\Sigma$ only for $x\in S_I$.  Moreover,  the limiter (\ref{bb}), (\ref{limiter}) with (\ref{defqp}) can enhance the efficiency of computation, and we shall use this modified IRP limiter in our numerical experiments. 
\end{rem}


\section{IRP high order schemes}
In this section we discuss the IRP property of high order schemes for both the p-system (\ref{ps}) and  the viscous p-system (\ref{pes}). These systems are known to share the same invariant region $\Sigma$ (see \cite{Sm94}).
For simplicity,  we will assume periodic boundary conditions from now on. 
\subsection{IRP schemes for (\ref{ps})}
Rewrite the p-system into a compact form
\begin{equation}\label{cptfm}
\mathbf{w}_t+F(\mathbf{w})_x=0,
\end{equation}
with $\mathbf{w}=(v,u)^\top$ and $F(\mathbf{w})=(-u,p(v))^\top$.   We begin with the first order finite volume scheme 
\begin{align}\label{1stFV}
\mathbf{w}^{n+1}_j=\mathbf{w}^n_j-\lambda\left(\hat{F}(\mathbf{w}^n_j,\mathbf{w}^n_{j+1})-\hat{F}(\mathbf{w}^n_{j-1},\mathbf{w}^n_j)\right),
\end{align}
with the Lax-Friedrich flux defined by 
\begin{align}\label{LF-}
\hat{F}(\mathbf{w}^n_{i},\mathbf{w}^n_{i+1})=\frac{1}{2}\left(F(\mathbf{w}_i^n)+F(\mathbf{w}^n_{i+1})-\sigma (\mathbf{w}^n_{i+1} - \mathbf{w}^n_i) \right), \quad i=j-1, j, 
\end{align} 
where $\mathbf{w}^{n}_j$ is approximation to the average of $\mathbf{w}$ on $I_j=[x_{j-1/2}, x_{j+1/2}]$ at time level $n$, and $\sigma$ is a positive number dependent upon $\mathbf{w}_{l}^n$ for $l=j-1,j,j+1$.  Given $\mathbf{w}^n_j\in \Sigma$, we want to find a proper $\sigma $ and a CFL condition so that $\mathbf{w}^{n+1}_j\in \Sigma$. 

We can rewrite (\ref{1stFV}) as
\begin{align}\label{LFcomb}
\mathbf{w}^{n+1}_j=\left(1-\lambda\sigma \right)\mathbf{w}^n_j+\lambda\sigma {\mathbf{w}}^*,
\end{align}
where
\begin{align*}
{\mathbf{w}}^*= \left(\frac{\mathbf{w}^n_{j-1}+\mathbf{w}^n_{j+1}}{2}-\frac{F(\mathbf{w}^n_{j+1})-F(\mathbf{w}^n_{j-1})}{2\sigma} \right).
\end{align*}
It is known, see \cite[Section 14.1]{LeVequeCL} for example, that for {
$\sigma\geq \max \limits_{v^n_j,v^n_{j\pm 1}} \sqrt{-p'(v)}$}, 
\begin{align*}
{\mathbf{w}}^*=\frac{1}{2\sigma}\int ^{\sigma}_{-\sigma}\mathbf{w}(t\xi, t)d\xi,
\end{align*}
where $\mathbf{w}(x, t)$
is the exact Riemann solution to (\ref{cptfm}) subject to initial data
\begin{align*}
\mathbf{w}(x ,0)=
\begin{cases}
\mathbf{w}^n_{j-1}, \quad x <0, \\
\mathbf{w}^n_{j+1}, \quad x >0.
\end{cases}
\end{align*}
For $\mathbf{w}^n_{j\pm1} \in \Sigma$, we have $\mathbf{w}(x, t) \in \Sigma$ since $\Sigma$ is the invariant region of the p-system. Therefore ${\mathbf{w}}^*$ lies in $\Sigma _0$ by Lemma \ref{lem:2}.  Since $\mathbf{w}^{n+1}_j$ is a convex combination of two vectors $\mathbf{w} \in \Sigma $ and ${\mathbf{w}}^* \in \Sigma_0 $ for $\lambda \sigma  \leq 1$, we then have $\mathbf{w}^{n+1}_j \in \Sigma_0$.  In summary, we have the following.
\begin{thm}[First order scheme]\label{thm:1stFV}
The first order scheme (\ref{1stFV}), (\ref{LF-}) with {$\sigma \geq \max \limits_{v^n_j,v^n_{j\pm 1}} \sqrt{-p'(v)}$}, preserves the invariant region $\Sigma$, i.e.,
$$
\text{if}\quad  \mathbf{w}^n_i\in \Sigma \quad \text{for}\quad  i=j, j\pm1, \quad  \text{then} \quad \mathbf{w}^{n+1}_j\in \Sigma_0,
 $$ 
under the CFL condition 
\begin{align}\label{1stfvcfl}
\lambda \sigma \leq 1 \quad \text{with} \quad \sigma \geq \max \limits_{v^n_j,v^n_{j\pm1}} \sqrt{-p'(v)}.
\end{align}
\end{thm}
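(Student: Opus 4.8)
The plan is to exploit the convex-combination structure of the Lax--Friedrichs update together with the averaging contraction already established in Lemma~\ref{lem:2}. First I would recast the scheme (\ref{1stFV})--(\ref{LF-}) in the form (\ref{LFcomb}),
\[
\mathbf{w}^{n+1}_j=(1-\lambda\sigma)\,\mathbf{w}^n_j+\lambda\sigma\,\mathbf{w}^*,
\]
so that under the CFL restriction $\lambda\sigma\le 1$ the updated value is a genuine convex combination of the old state $\mathbf{w}^n_j$ and the intermediate state $\mathbf{w}^*$. The weight $\lambda\sigma$ is strictly positive, since $\sigma\ge\max\sqrt{-p'(v)}>0$ and $\lambda>0$; this positivity is what will push the outcome into the open set $\Sigma_0$ rather than merely into $\Sigma$.

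The crux is to show $\mathbf{w}^*\in\Sigma_0$. For this I would invoke the classical Godunov/Lax--Friedrichs interpretation (see \cite[Section~14.1]{LeVequeCL}): provided $\sigma$ dominates the maximal wave speed, the whole Riemann fan emanating from the cell interface stays inside the cone $|x|\le\sigma t$, and then $\mathbf{w}^*$ coincides exactly with the spatial average
\[
\mathbf{w}^*=\frac{1}{2\sigma}\int_{-\sigma}^{\sigma}\mathbf{w}(t\xi,t)\,d\xi
\]
of the exact Riemann solution $\mathbf{w}(x,t)$ to (\ref{cptfm}) with left/right data $\mathbf{w}^n_{j-1},\mathbf{w}^n_{j+1}$. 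Since $\mathbf{w}^n_{j\pm1}\in\Sigma$ and $\Sigma$ is the invariant region of the p-system, the solution satisfies $\mathbf{w}(x,t)\in\Sigma$ for all $x$. Because this profile is nontrivial (it contains the jump data), Lemma~\ref{lem:2} upgrades its average from $\Sigma$ into the open region, giving $\mathbf{w}^*\in\Sigma_0$.

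Finally I would combine the two ingredients. The vector $\mathbf{w}^{n+1}_j$ is a convex combination of $\mathbf{w}^n_j\in\Sigma$ and $\mathbf{w}^*\in\Sigma_0$ carrying a strictly positive weight $\lambda\sigma$ on the interior point, and by convexity of $\Sigma$ any such combination again lies in the interior $\Sigma_0$ (the boundary-avoiding direction is inherited from the interior endpoint). This yields $\mathbf{w}^{n+1}_j\in\Sigma_0$, which is stronger than the stated $\mathbf{w}^{n+1}_j\in\Sigma$.

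I expect the main obstacle to be justifying the exact-average identity for $\mathbf{w}^*$: one must confirm that the bound $\sigma\ge\max\sqrt{-p'(v)}$ really confines all waves of the Riemann problem to $[-\sigma t,\sigma t]$, so that integrating the conservation law over the fan reproduces the flux-difference expression defining $\mathbf{w}^*$. Once this structural fact is secured, the invariant-region property and Lemma~\ref{lem:2} render the remaining steps routine.
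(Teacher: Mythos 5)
Your proposal is correct and follows essentially the same route as the paper's own argument: the convex-combination rewriting (\ref{LFcomb}), the identification of $\mathbf{w}^*$ with the average of the exact Riemann solution via \cite[Section 14.1]{LeVequeCL}, the invariant-region property plus Lemma \ref{lem:2} to place $\mathbf{w}^*$ in $\Sigma_0$, and the final convexity step. The only point you flag as an obstacle (confining the Riemann fan to $|x|\le\sigma t$) is exactly the structural fact the paper also takes from LeVeque without further elaboration.
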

\begin{rem} Note that for scheme (\ref{1stFV}), the flux parameter $\sigma$ at each $x_{j+1/2}$ needs to be dependent on $\mathbf{w}_i^n$ with $i=j-1, j, j+1, j+2$, instead of the usual local Lax-Friedrichs flux with $\sigma$ depending on  $\mathbf{w}_i^n$ with $i=j, j+1$. An alternative proof of the positivity-preserving property was given in the appendix in \cite{PerthameShu1996} for compressible Euler equations; their technique when applied to (\ref{1stFV}) with the global Lax-Friedrich flux leads to $\lambda \sigma\leq \frac{1}{2}$. In contrast, the CFL condition (\ref{1stfvcfl}) is more relaxed.  We also note that using the quadratic structure of the flux function $F$ in the full compressible Euler equation,  Zhang was able to obtain a relaxed CFL in \cite{Zh17} with the local Lax-Friedrich flux by direct verification without reference to the Riemann solution.   
\end{rem}

We next consider a (k+1)th-order scheme with reconstructed polynomials or approximation polynomials of degree $k$.  With forward Euler time discretization, the cell average evolves by  
\begin{align}\label{eq: High order FV scheme}
\bar{\mathbf{w}}^{n+1}_j=\bar{\mathbf{w}}^n_j-\lambda [\hat{F}(\mathbf{w}^{-}_{j+\frac{1}{2}},\mathbf{w}^{+}_{j+\frac{1}{2}})-\hat{F}(\mathbf{w}^{-}_{j-\frac{1}{2}},\mathbf{w}^{+}_{j-\frac{1}{2}})],
\end{align}
where $\bar{\mathbf{w}}^n_j$ is the cell average of $ \mathbf{w}^{n}_h$ on $I_j$ at time level $n$, $\mathbf{w}^{\pm }_{j+\frac{1}{2}}$ are approximations to the point value of $\mathbf{w}$ at $x_{j+1/2}$ and time level $n$ from the left and from the right respectively. The local Lax-Friedrichs flux is taken with 
\begin{align}\label{si}
\sigma \geq \max \limits_{v^-_{j\pm 1/2}, v^+_{j\pm1/2}} \sqrt{-p'(v)}.
\end{align}
We consider an $N-$point 
Legendre Gauss-Lobatto quadrature rule on $I_j$,  with quadrature weights $\hat \omega_i$ on $[-\frac{1}{2},\frac{1}{2}]$ such that  $\sum_{i=1}^N \hat \omega_i =1$, which is exact for integrals of polynomials of degree up to $k$, if $2N-3 \geq k$. 
Denote these quadrature points  on $I_j$ as 
$$
S_j^C:=\{\hat x^i_j, 1\leq i \leq N\},  
$$
where $\hat x^1_j=x_{j-1/2}$ and $\hat x^N_j=x_{j+1/2}$.  The cell average decomposition then takes the form 
\begin{align}\label{avgdec}
\bar{\mathbf{w}}^n_j=\sum_{i=2}^{N-1} \hat\omega_\alpha \mathbf{w}_h^n(\hat x_j^i) +\hat \omega_1
\mathbf{w}^{+}_{j-\frac{1}{2}} +\hat \omega_N \mathbf{w}^{-}_{j+\frac{1}{2}},
\end{align}
where it is known that $\hat \omega_1=\hat \omega_N=1/(N(N-1))$.  Hence  (\ref{eq: High order FV scheme}) can be rewritten as a linear convex combination of the form 
\begin{align}\label{decschm}
\bar{\mathbf{w}}^{n+1}_j=\sum ^{N-1}_{i=2}\hat{\omega}_{i}\mathbf{w}_h^n(\hat{x}^i_j)+\hat{\omega}_1K_1 +\hat{\omega}_NK_N,
\end{align}
where 
\begin{align*}
K_1=\mathbf{w}^+_{j-\frac{1}{2}}-\frac{\lambda}{\hat{\omega}_1}\left(\hat{F}(\mathbf{w}^+_{j-\frac{1}{2}},\mathbf{w}^-_{j+\frac{1}{2}})-\hat{F}(\mathbf{w}^-_{j-\frac{1}{2}},\mathbf{w}^+_{j-\frac{1}{2}}) \right),\\
K_N=\mathbf{w}^-_{j+\frac{1}{2}}-\frac{\lambda}{\hat{\omega}_N}\left(\hat{F}
(\mathbf{w}^-_{j+\frac{1}{2}},\mathbf{w}^+_{j+\frac{1}{2}})- \hat{F}(\mathbf{w}^+_{j-\frac{1}{2}},\mathbf{w}^-_{j+\frac{1}{2}})\right)
\end{align*}
are of the same type as the first order scheme (\ref{1stFV}).  Such decomposition of (\ref{eq: High order FV scheme}),  first introduced by Zhang and Shu (\cite{ZhangShu2010b}) for the compressible Euler equation, suffices for us to conclude the following result. 
\begin{thm}[High order scheme]\label{thm:highFV} 
A sufficient condition for $ \bar{\mathbf{w}}^{n+1}_j\in \Sigma _0$ by scheme (\ref{eq: High order FV scheme}) with (\ref{si}) is 
$$
\mathbf{w}^n_h(x)\in \Sigma \quad \text{for} \; x\in  S_j^C
$$
under the CFL condition 
\begin{align}\label{pscfl}
\lambda \sigma  \leq \frac{1}{N(N-1)} \quad \text{with} \quad \sigma \geq \max _{v^-_{j\pm1/2}, v^{+}_{j\pm1/2}}\sqrt{-p'(v)}, \quad N=\lceil \frac{k+3}{2}\rceil.
\end{align}
\end{thm}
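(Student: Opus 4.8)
The plan is to exploit the convex-combination structure that has already been exhibited in (\ref{decschm}) for the evolved cell average, and to reduce the claim to the first-order result Theorem~\ref{thm:1stFV}. Recall that (\ref{decschm}) writes
\begin{align*}
\bar{\mathbf{w}}^{n+1}_j=\sum_{i=2}^{N-1}\hat{\omega}_i\,\mathbf{w}_h^n(\hat{x}^i_j)+\hat{\omega}_1 K_1+\hat{\omega}_N K_N
\end{align*}
as a convex combination, with all weights $\hat{\omega}_i>0$ summing to $1$. The interior nodal values $\mathbf{w}_h^n(\hat{x}^i_j)$ for $2\le i\le N-1$, together with the face values entering $K_1$ and $K_N$ (namely $\mathbf{w}^{\pm}_{j\pm1/2}$, which are the endpoint nodes of $S_j^C$, $S_{j-1}^C$, $S_{j+1}^C$), all correspond to points of the test set and hence lie in $\Sigma$ by hypothesis. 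Thus it suffices to show that the two boundary contributions $K_1$ and $K_N$ lie in $\Sigma_0$.

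First I would record that the Legendre Gauss--Lobatto rule with $N=\lceil(k+3)/2\rceil$ points is exact for polynomials of degree $2N-3\ge k$, which is exactly what validates the cell-average decomposition (\ref{avgdec}) used to obtain (\ref{decschm}). Next I would recognize each of $K_1$ and $K_N$ as a first-order Lax--Friedrichs update of the form (\ref{1stFV}): for $K_1$ one takes the center state to be $\mathbf{w}^+_{j-1/2}$, the left and right neighbors to be $\mathbf{w}^-_{j-1/2}$ and $\mathbf{w}^-_{j+1/2}$, and replaces the mesh ratio $\lambda$ by the effective value $\lambda/\hat{\omega}_1=\lambda N(N-1)$ (using $\hat{\omega}_1=\hat{\omega}_N=1/(N(N-1))$); $K_N$ is treated symmetrically. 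The CFL restriction (\ref{pscfl}) reads precisely $(\lambda/\hat{\omega}_1)\sigma\le 1$, and the flux-parameter bound (\ref{si}) supplies $\sigma\ge\max\sqrt{-p'(v)}$ over the states used by both updates. Since the three states feeding each of $K_1,K_N$ belong to $\Sigma$, Theorem~\ref{thm:1stFV} applies and yields $K_1,K_N\in\Sigma_0$.

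Finally I would conclude strict membership in $\Sigma_0$ from the convexity of $r$ and the concavity of $s$. By Jensen's inequality applied to the convex combination above,
\begin{align*}
r(\bar{\mathbf{w}}^{n+1}_j)\le \sum_{i=2}^{N-1}\hat{\omega}_i\, r(\mathbf{w}_h^n(\hat{x}^i_j))+\hat{\omega}_1 r(K_1)+\hat{\omega}_N r(K_N)<r_0,
\end{align*}
where the strict inequality comes from $r(K_1),r(K_N)<r_0$ carrying strictly positive weights $\hat{\omega}_1,\hat{\omega}_N$, while the interior terms only satisfy $r\le r_0$. The symmetric argument with the concave $s$ gives $s(\bar{\mathbf{w}}^{n+1}_j)>s_0$, whence $\bar{\mathbf{w}}^{n+1}_j\in\Sigma_0$.

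Since the decomposition (\ref{decschm}) does the heavy lifting, the proof is largely bookkeeping; I expect the main point requiring care to be the correct identification of the effective mesh ratio $\lambda/\hat{\omega}_1$ in $K_1,K_N$ and the verification that the single $\sigma$ fixed by (\ref{si}) simultaneously dominates $\sqrt{-p'(v)}$ at all states feeding both boundary updates, so that Theorem~\ref{thm:1stFV} may be invoked uniformly. The only genuinely non-mechanical step is the final strict-interior conclusion, which must use that $K_1,K_N$ land in the \emph{open} set $\Sigma_0$ with strictly positive weights, rather than merely in $\Sigma$.
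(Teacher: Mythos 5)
Your proof is correct and follows essentially the same route as the paper, which simply asserts that the Zhang--Shu decomposition (\ref{decschm}) into interior nodal values plus the two formal first-order updates $K_1,K_N$ (with effective mesh ratio $\lambda/\hat{\omega}_1=\lambda N(N-1)$) suffices, leaving the invocation of Theorem~\ref{thm:1stFV} and the final convexity step implicit. You have merely written out those details, including the strict-interior conclusion via the positive weights on $K_1,K_N$, so there is nothing to correct.
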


\begin{rem}\label{altcv}
For third order schemes, an alternative decomposition of the cell average can be found through Lagrangian interpolation on the test set 
$S^C_j=x_j+\frac{\Delta x}{2}\{-1,\gamma , 1\}$:
\begin{align}\label{scv}
\bar{\mathbf{w}}^n_j=\frac{1+3\gamma}{6(1+\gamma)}\mathbf{w}^+_{j-\frac{1}{2}}+\frac{2}{3(1-\gamma ^2)}\mathbf{w}^n_h\left(x_j+\frac{\Delta x}{2}\gamma\right)+\frac{1-3\gamma}{6(1-\gamma)}\mathbf{w}^-_{j+\frac{1}{2}},
\end{align}
where $|\gamma|\leq \frac{1}{3}$ to ensure non-negative weights. Theorem \ref{thm:highFV} still holds under this choice of test set.
\end{rem}
\begin{rem}\label{ext}
\red{Even though we proved both Theorem \ref{thm:1stFV} and Theorem  \ref{thm:highFV} only for the p-system (\ref{ps}), the results obtained actually hold for all one-dimensional hyperbolic conservation laws (\ref{cptfm+}) equipped with a convex invariant region, as long as the flux parameter 
$\sigma$ is chosen so that 
$$
\sigma  \geq  \lambda_{\rm max},
$$
where $\lambda_{\rm max}$ is the maximum wave speed (largest eigenvalues of the Jacobian matrix of $F(\mathbf{w})$). A direct calculation shows that  $\lambda_{\rm max}= |u|+ \sqrt{p'(\rho)}$ for (\ref{IGDE}), and $\lambda_{\rm max}= |u|+ \sqrt{gh}$ for (\ref{IGDE+}).
}
\end{rem}

\subsection{IRP DG schemes for (\ref{pes})} 
The p-system with artificial diffusion can be rewritten as
\begin{align}\label{cptfmdf}
\mathbf{w}_t+F(\mathbf{w})_x=\epsilon \mathbf{w}_{xx}
\end{align}
with $\mathbf{w}=(v,u)^\top$ and $F(\mathbf{w})=(-u,p(v))^\top$.  
 The DG  scheme for  (\ref{cptfmdf}) is to find $\mathbf{w}_h\in V_h$ such that
\begin{align}\label{dgv}
\int _{I_j}(\mathbf{w}_h)_t\phi dx=&\int _{I_j}F(\mathbf{w}_h)\phi _xdx-\hat{F}(\mathbf{w}^-_h,\mathbf{w}^+_h)\phi \big |_{\partial I_j}\\ \notag 
&-\int _{I_j}\epsilon (\mathbf{w}_h)_x\phi _xdx+\epsilon \left(\widehat{(\mathbf{w}_h)_x}\phi+(\mathbf{w}_h-\{\mathbf{w}_h\})\phi _x \right)\big |_{\partial I_j}\quad \forall \phi \in V_h,
\end{align}
where $\hat{F}$ is the local Lax-Friedrich flux for the convection part as defined in (\ref{LF-}), see \cite{CLS89};  while the diffusive numerical flux $\widehat{(\mathbf{w}_h)_x}$ is chosen, following \cite{LY10, LY09}, as
\begin{align}\label{df}
\widehat{(\mathbf{w}_h)_x}=\beta _0\frac{[\mathbf{w}_h]}{\Delta x}+\{(\mathbf{w}_h)_x\}+\beta _1\Delta x[(\mathbf{w}_h)_{xx}], 
\end{align}
where $[\cdot]$ denotes the jump of the function and $\{\cdot \}$ denotes the average of the function across the cell interface. The flux parameters $\beta _0$ and $\beta _1$ are  to be chosen from an appropriate range in order to achieve the desired IRP property.  

With forward Euler time discretization of (\ref{dgv}),  the cell average evolves by  
\begin{align}\label{ddgscm}
\bar{\mathbf{w}}^{n+1}_j = \bar{\mathbf{w}}^n_j-\lambda\hat{F}(\mathbf{w}^-_h,\mathbf{w}^+_h)\big |_{\partial I_j}+\epsilon \mu \Delta x\widehat{(\mathbf{w}_h)_x}\big |_{\partial I_j},
\end{align}
where  the mesh ratios $\mu =\frac{\Delta t}{\Delta x^2}$ and  $\lambda= \frac{\Delta t}{\Delta x}$.
 
We want to find sufficient conditions on $\lambda, \mu$ and a test set S such that if $\mathbf{w}^{n}_h(x)\in \Sigma$ on $S$, then $\bar{\mathbf{w}}^{n+1}_j \in \Sigma_0$ for all $j$. 
To do so,  we rewrite (\ref{ddgscm}) as  
\begin{align}\label{split}
\bar{\mathbf{w}}^{n+1}_j = \frac{1}{2}{C}_j+\frac{1}{2}{D}_j,
\end{align}
where
\begin{align*}
{C}_j = & \bar{\mathbf{w}}^n_j - 2\lambda\left(\hat{F}(\mathbf{w}^-_{j+\frac{1}{2}},\mathbf{w}^+_{j+\frac{1}{2}})-\hat{F}(\mathbf{w}^-_{j-\frac{1}{2}},\mathbf{w}^+_{j-\frac{1}{2}})  \right),\\
{D}_j = & \bar{\mathbf{w}}^n_j+2\epsilon \mu \left(\Delta x\widehat{(\mathbf{w}^n_h)_x}\big |_{x_{j+\frac{1}{2}}}-\Delta x\widehat{(\mathbf{w}^n_h)_x}\big |_{x_{j-\frac{1}{2}}} \right).
\end{align*}
Recall the result obtained in Theorem \ref{thm:highFV}, we see that for any $k>0$, the sufficient condition for  
$
 {C}_j \in \Sigma_0
 $
 is the following:
 $$
\mathbf{w}^n_h(x)\in \Sigma \quad \text{for} \; x\in  S_j^C
$$
and the CFL condition 
\begin{align}\label{pscflD}
\lambda \sigma  \leq \frac{1}{2N(N-1)} \quad \text{with} \quad \sigma \geq \max _{v^-_{j\pm1/2}, v^{+}_{j\pm1/2}}\sqrt{-p'(v)},
\end{align}
where the $N\geq (3+k)/2$ is required to hold. The remaining task is to find sufficient conditions so that $ {D}_j \in \Sigma_0$. These together when combined with (\ref{split}) will give $\bar{\mathbf{w}}^{n+1}_j \in \Sigma_0$ as desired. 

In order to ensure that ${D}_j\in \Sigma _0$, we follow Liu and Yu in \cite{LYH14} so as to obtain some sufficient conditions for $k=1, 2$, respectively.  The results are summarized in the following two lemmas.
\begin{lem}\label{lem:p1} 
Consider scheme (\ref{ddgscm}) with $k=1$ and $\beta_0 \geq 1/2$. The sufficient condition for  ${D}_j \in \Sigma_0$ is 
$$
\mathbf{w}^n_h(x)\in \Sigma \quad \text{for} \; x\in \mathop \cup \limits_{i=j-1}^{j+1} S_i^D,
$$
where 
\begin{align*}
S^D_i=x_i+\frac{\Delta x}{2}\{-\gamma, \gamma\}
\end{align*}
for $\gamma  \in[-1, 1] \setminus \{0\}$ with
\begin{align}\label{p1gamma}
\left|1-\frac{1}{\beta _0} \right|\leq |\gamma| \leq 1,
\end{align}
under the condtion  $\mu\leq \frac{1}{4\epsilon \beta _0}$.
\end{lem}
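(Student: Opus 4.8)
The plan is to write $D_j$ as an explicit convex combination of the six nodal values of $\mathbf{w}^n_h$ on $\cup_{i=j-1}^{j+1}S_i^D$ and then to invoke the convexity of $\Sigma$ together with the contraction property of Lemma \ref{lem:2}. The starting observation is that for $k=1$ the restriction of $\mathbf{w}^n_h$ to each cell is affine, so $(\mathbf{w}_h)_{xx}\equiv 0$ and the last term in the diffusive flux (\ref{df}) drops out, leaving $\widehat{(\mathbf{w}_h)_x}=\beta_0[\mathbf{w}_h]/\Delta x+\{(\mathbf{w}_h)_x\}$ at each interface.

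Writing $\mathbf{w}^n_h|_{I_i}(x)=\bar{\mathbf{w}}^n_i+\mathbf{w}_i'(x-x_i)$ with constant slope $\mathbf{w}_i'$, I would compute the two interface fluxes $\widehat{(\mathbf{w}_h)_x}\big|_{x_{j\pm1/2}}$ directly; a short calculation gives
\[
\Delta x\Big(\widehat{(\mathbf{w}_h)_x}\big|_{x_{j+\frac12}}-\widehat{(\mathbf{w}_h)_x}\big|_{x_{j-\frac12}}\Big)=\beta_0(\bar{\mathbf{w}}^n_{j+1}-2\bar{\mathbf{w}}^n_j+\bar{\mathbf{w}}^n_{j-1})+\tfrac{\Delta x}{2}(1-\beta_0)(\mathbf{w}_{j+1}'-\mathbf{w}_{j-1}').
\]
Substituting into the definition of $D_j$ and setting $a=2\epsilon\mu\beta_0$ yields
\[
D_j=(1-2a)\bar{\mathbf{w}}^n_j+a\bar{\mathbf{w}}^n_{j+1}+a\bar{\mathbf{w}}^n_{j-1}+\epsilon\mu\Delta x(1-\beta_0)(\mathbf{w}_{j+1}'-\mathbf{w}_{j-1}'),
\]
which separates cleanly into a symmetric part (the three cell averages) and an antisymmetric part carrying the slopes.

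Next I would pass to nodal coordinates on the test set. Since $\mathbf{w}^n_h$ is affine on $I_i$, the two symmetric values $P_i^{\pm}:=\mathbf{w}^n_h(x_i\pm\gamma\Delta x/2)$ satisfy $\bar{\mathbf{w}}^n_i=\tfrac12(P_i^++P_i^-)$ and $\mathbf{w}_i'=(P_i^+-P_i^-)/(\gamma\Delta x)$, so the slopes can be eliminated. Inserting these relations and setting $b=\epsilon\mu(1-\beta_0)/\gamma$, the expression above becomes a combination of $P_j^{\pm},P_{j\pm1}^{\pm}$ with weights $\tfrac{1-2a}{2}$ on each $P_j^{\pm}$, and $\tfrac{a}{2}\pm b$ on the four outer nodes; one checks that the antisymmetric contribution sums to zero and hence the six weights sum to $1$.

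Finally I would read off the positivity constraints. The weight $\tfrac{1-2a}{2}$ is nonnegative exactly when $a\le\tfrac12$, i.e. $\mu\le\frac{1}{4\epsilon\beta_0}$; the outer weights $\tfrac{a}{2}\pm b$ are nonnegative exactly when $|b|\le\tfrac{a}{2}$, which reduces to $|\gamma|\ge|1-1/\beta_0|$, while $|\gamma|\le1$ keeps the nodes inside their cells and $\beta_0\ge\tfrac12$ is precisely what makes the admissible interval $[\,|1-1/\beta_0|,\,1\,]$ nonempty. Thus under the stated hypotheses $D_j$ is a genuine convex combination of points $P_i^{\pm}\in\Sigma$, so $D_j\in\Sigma$, and the strict convexity of $r$ and concavity of $s$ (the discrete counterpart of Lemma \ref{lem:2}) upgrade this to $D_j\in\Sigma_0$. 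The main obstacle is not any single estimate but guessing the correct symmetric two-node test set $S_i^D$ and arranging the decomposition so that the three positivity requirements line up exactly with $\mu\le\frac{1}{4\epsilon\beta_0}$, $|1-1/\beta_0|\le|\gamma|\le1$, and $\beta_0\ge\tfrac12$; the sign bookkeeping in the antisymmetric slope term is the delicate point.
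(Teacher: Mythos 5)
Your proof is correct and follows essentially the same route as the paper's: both express $D_j$ as a convex combination of the nodal values $\mathbf{w}^n_h(x_i\pm\gamma\Delta x/2)$ over the cells $i=j-1,j,j+1$ and read the three hypotheses ($\mu\le\frac{1}{4\epsilon\beta_0}$, $|1-1/\beta_0|\le|\gamma|\le1$, $\beta_0\ge\tfrac12$) off the nonnegativity of the weights, your outer weights $\tfrac{a}{2}\pm b$ being exactly the paper's $2\epsilon\mu\alpha_{1}$ and $2\epsilon\mu\alpha_{2}$. The only difference is bookkeeping -- the paper parametrizes the affine polynomial by Lagrange interpolation at $\pm\gamma$ while you pass through cell averages and slopes -- and your closing appeal to a Lemma~\ref{lem:2}-type argument to upgrade $\Sigma$ to $\Sigma_0$ is the same step the paper takes.
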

\begin{lem}\label{lem:p2} 
Consider scheme (\ref{ddgscm}) with $k=2$ and 
\begin{align}\label{p2beta}
\beta _0\geq 1, \quad \frac{1}{8} \leq  \beta _1 \leq  \frac{1}{4}.
\end{align}
The sufficient condition for  ${D}_j \in \Sigma_0$ is 
$$
\mathbf{w}^n_h(x)\in \Sigma \quad \text{for} \; x\in \mathop \cup \limits_{i=j-1}^{j+1} S_i^D,
$$
where 
\begin{align*}
S^D_i=x_i+\frac{\Delta x}{2}\{-1, \gamma , 1\}
\end{align*}
for $\gamma  \in[-1, 1]$  satisfying 
\begin{align}\label{p2gamma}
|\gamma |< \frac{1}{3}  \text{ and } |\gamma |\leq 8\beta _1-1
\end{align}
under the CFL conditions $\mu\leq \mu _0$, where 
\begin{align}\label{mu0}
\mu _0=\frac{1}{12\epsilon} \min \left\{\frac{1\pm 3\gamma}{(1\pm \gamma)\beta _0-\theta}, \frac{2}{\theta}\right\}, \quad 
\theta:=2- 8\beta _1 \in [0, 1]. 
\end{align}
\end{lem}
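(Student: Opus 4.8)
The plan is to represent $D_j$ as a convex combination of the point values $\mathbf{w}^n_h(\xi)$ at the test points $\xi\in\cup_{i=j-1}^{j+1}S^D_i$ and then invoke the convexity of $\Sigma$. Throughout, recall that for $k=2$ the restriction of $\mathbf{w}^n_h$ to each cell is a quadratic, hence is uniquely determined by its three values on $S^D_i=x_i+\frac{\Delta x}{2}\{-1,\gamma,1\}$ whenever $|\gamma|<1$, and that $(\mathbf{w}^n_h)_{xx}$ is constant on each cell. This is the same device used by Liu and Yu in \cite{LYH14}, now carried out componentwise through the Riemann-invariant structure.

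First I would expand each diffusive flux $\widehat{(\mathbf{w}^n_h)_x}$ in (\ref{df}) at the two interfaces $x_{j\pm 1/2}$. Writing the jump $[\mathbf{w}_h]$, the average of first derivatives $\{(\mathbf{w}_h)_x\}$, and the jump of the cellwise-constant second derivatives $[(\mathbf{w}_h)_{xx}]$ through Lagrange interpolation on $S^D_i$, every quantity entering $D_j$ becomes an explicit linear combination of the nine point values $\mathbf{w}^n_h(\xi)$ with $\xi\in S^D_{j-1}\cup S^D_j\cup S^D_{j+1}$. The cell-average term $\bar{\mathbf{w}}^n_j$ is treated by the exact three-point quadrature (\ref{scv}) already used in Remark \ref{altcv}, which contributes the weights $\frac{1\pm3\gamma}{6(1\pm\gamma)}$ and $\frac{2}{3(1-\gamma^2)}$ to the values sitting in $S^D_j$.

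Next I would collect terms and read off the coefficient of each point value. Because the DDG diffusion operator annihilates constants, these coefficients automatically sum to one, so the only remaining task is to force them all to be nonnegative. The contributions from the neighboring cells $I_{j\pm1}$ carry the overall factor $\epsilon\mu$ together with combinations of $\beta_0$ and $\beta_1$; requiring a definite sign for these is exactly what produces the restrictions $\beta_0\geq 1$, $\frac18\leq\beta_1\leq\frac14$, together with the coupling $|\gamma|\leq 8\beta_1-1$, the last being needed to absorb the potentially negative contribution of the $\beta_1\Delta x[(\mathbf{w}_h)_{xx}]$ term at the endpoints, while $|\gamma|<\frac13$ keeps the base quadrature weights positive. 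The diagonal self-cell weights, which equal the quadrature weights minus diffusion corrections proportional to $\epsilon\mu$, stay nonnegative precisely when $\mu$ lies below a threshold; substituting $\theta=2-8\beta_1$ organizes the two competing lower bounds, one from the endpoint weights $\frac{1\pm3\gamma}{(1\pm\gamma)\beta_0-\theta}$ and one from the interior weight $\frac{2}{\theta}$, into the stated minimum $\mu_0$, with the prefactor $\frac{1}{12\epsilon}$ absorbing both the $2\epsilon\mu$ scaling in $D_j$ and the denominator $6$ of the quadrature weights.

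With all weights nonnegative and summing to one, $D_j$ is a genuine convex combination of values of $\mathbf{w}^n_h$ at test points, each of which lies in $\Sigma$ by hypothesis; convexity of $r$ and concavity of $s$ then give $r(D_j)\leq r_0$ and $s(D_j)\geq s_0$, i.e. $D_j\in\Sigma$, and the strict membership $D_j\in\Sigma_0$ follows from the strict Jensen argument already used in Lemma \ref{lem:2}, since the combination mixes genuinely distinct point values of the non-trivial polynomial $\mathbf{w}^n_h$. I expect the main obstacle to be the sign bookkeeping in the middle step: computing the explicit coefficients for quadratics and pinning down the exact simultaneous conditions on $(\beta_0,\beta_1,\gamma,\mu)$ under which none of them goes negative, in particular verifying that the two endpoint constraints and the single interior constraint collapse to the one CFL bound $\mu_0$ after the substitution $\theta=2-8\beta_1$.
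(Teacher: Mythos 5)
Your proposal follows essentially the same route as the paper's proof in Appendix A: Lagrange interpolation of the cellwise quadratics on the three-point sets $S^D_i$, the cell-average decomposition with weights $\frac{1\pm3\gamma}{6(1\pm\gamma)}$ and $\frac{2}{3(1-\gamma^2)}$, an explicit rewriting of $D_j$ as a combination of the nine point values whose coefficients are forced to be nonnegative by (\ref{p2beta}), $|\gamma|\leq 8\beta_1-1$ and $\mu\leq\mu_0$, and then convexity of $\Sigma$ plus the strict Jensen argument of Lemma \ref{lem:2}. The plan is correct, and your observation that the weights sum to one because the DDG diffusion operator annihilates constants is a clean shortcut past the bookkeeping the paper leaves implicit.
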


For reader's convenience,  we outline the related calculations  leading to the above results in appendix \ref{appsec:lems}.
\begin{rem}\label{ddg} 
\red{
In \cite{LYH14}, Liu and Yu proposed a third order maximum-principle-preserving DDG 
scheme over Cartesian meshes for the linear Fokker-Planck equation 
\begin{align}\label{fp}
u_t =\nabla_x \cdot (\nabla_x u +\nabla_x V(x) u),
\end{align} 
where the potential $V$ is given, provided the flux parameter $(\beta_0, \beta_1)$ falls into the range $\beta_0 \geq 1$ and $\beta_1\in [1/8, 1/4]$.
The maximum-principle for (\ref{fp}) means that if $u_0\in [c_1, c_2]e^{-V(x)}$, then $u(x, t) \in [c_1, c_2]e^{-V(x)}$ for all $t>0$, which implies the usual maximum-principle for diffusion $u(x, t)\in [c_1, c_2]$ for all $t>0$.  Extension to unstructured meshes is non-trivial, we refer to 
 \cite{CHY16} for some recent results in solving diffusion equations over triangular meshes. }   
\end{rem}

Combining (\ref{pscflD}), with Lemma \ref{lem:p1} and Lemma \ref{lem:p2}, we are able to establish the following result.
\begin{thm}\label{thm: viscosity}
A sufficient condition for $\bar{\mathbf{w}}^{n+1}_j\in \Sigma _0$ in scheme  (\ref{ddgscm}) is
\begin{align*}
\mathbf{w}^n_h(x)\in \Sigma \quad \text{for }x\in \mathop \cup \limits_{i=j-1}^{j+1} S_i,
\end{align*}
under the CFL conditions $\lambda \leq \lambda_0$ and $\mu\leq \mu_0$,  where 
\begin{itemize}
\item[(i)] for second order scheme with $\beta _0\geq \frac{1}{2}$,  
\begin{align*}
S_i=x_i+\frac{\Delta x}{2}\{-1,1\}, \quad \lambda_0 =\frac{1}{4\sigma},\quad \mu_0 =\frac{1}{4\epsilon\beta _0};
\end{align*}
\item[(ii)] for third order scheme with $\beta _0\geq 1$ and $\frac{1}{8}\leq \beta _1\leq \frac{1}{4}$, 
\begin{align*}
&S_i=x_i+\frac{\Delta x}{2}\{-1,\gamma,1\},\\
&\lambda_0 =\frac{1}{12\sigma}, \quad \mu_0= \frac{1}{12\epsilon} \min\left\{\frac{1\pm 3\gamma}{(1\pm \gamma)\beta _0-\theta}, \frac{2}{\theta}\right\}, \quad \theta:=2- 8\beta _1 \in [0, 1],
\end{align*}
where $|\gamma|<\frac{1}{3}$.
\end{itemize}
\end{thm}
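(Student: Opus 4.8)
The plan is to exploit the linear convex splitting (\ref{split}), $\bar{\mathbf{w}}^{n+1}_j = \frac12 C_j + \frac12 D_j$, together with the convexity of $\Sigma_0$. Indeed $\Sigma_0 = \{r<r_0\}\cap\{s>s_0\}$ is the intersection of the sublevel set of the convex function $r$ with the superlevel set of the concave function $s$, hence convex. Therefore it suffices to prove separately that the convective contribution $C_j$ and the diffusive contribution $D_j$ each lie in $\Sigma_0$; the conclusion $\bar{\mathbf{w}}^{n+1}_j\in\Sigma_0$ then follows because a convex combination of two points of $\Sigma_0$ stays in $\Sigma_0$.

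For the convective part I would observe that $C_j = \bar{\mathbf{w}}^n_j - 2\lambda(\hat F_{j+1/2}-\hat F_{j-1/2})$ is exactly a high-order finite-volume update of the form (\ref{eq: High order FV scheme}) but with the time step doubled, so that Theorem \ref{thm:highFV} applies once the factor $2$ is absorbed into the CFL number; this is precisely condition (\ref{pscflD}), $\lambda\sigma\le\frac{1}{2N(N-1)}$. For $k=1$ I take the $N=2$ Legendre Gauss--Lobatto points, which are the two cell endpoints and give $\lambda_0=\frac{1}{4\sigma}$; for $k=2$ I instead invoke the alternative cell-average decomposition of Remark \ref{altcv} on the node set $x_j+\frac{\Delta x}{2}\{-1,\gamma,1\}$ with $|\gamma|<\frac13$, which keeps Theorem \ref{thm:highFV} valid with $N=3$ and yields $\lambda_0=\frac{1}{12\sigma}$. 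In either case $C_j\in\Sigma_0$ provided $\mathbf{w}^n_h(x)\in\Sigma$ on the convective test set $S^C_j$.

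For the diffusive part I would apply Lemma \ref{lem:p1} when $k=1$ and Lemma \ref{lem:p2} when $k=2$. In the second-order case I specialize the free parameter in Lemma \ref{lem:p1} to $\gamma=1$, so that $S^D_i=x_i+\frac{\Delta x}{2}\{-1,1\}$ reduces to the two endpoints; the admissibility bound (\ref{p1gamma}) then reads $|1-1/\beta_0|\le 1$, i.e. $\beta_0\ge\frac12$, and the diffusive restriction becomes $\mu\le\frac{1}{4\epsilon\beta_0}=\mu_0$. In the third-order case Lemma \ref{lem:p2} directly supplies $S^D_i=x_i+\frac{\Delta x}{2}\{-1,\gamma,1\}$, the parameter ranges $\beta_0\ge 1$ and $\frac18\le\beta_1\le\frac14$, and the bound $\mu\le\mu_0$ of (\ref{mu0}). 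Hence $D_j\in\Sigma_0$ whenever $\mathbf{w}^n_h(x)\in\Sigma$ on $\cup_{i=j-1}^{j+1}S^D_i$.

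The concluding step is to reconcile the two test sets into the single set $\cup_{i=j-1}^{j+1}S_i$ stated in the theorem. With the choices just made the convective and diffusive node sets on each cell coincide, $S^C_i=S^D_i=S_i$, namely $\{-1,1\}$ in case (i) and $\{-1,\gamma,1\}$ in case (ii); moreover the diffusive requirement already spans the three cells $i=j-1,j,j+1$, so it subsumes the convective requirement $S^C_j\subset\cup_i S_i$. Consequently the single hypothesis $\mathbf{w}^n_h(x)\in\Sigma$ for $x\in\cup_{i=j-1}^{j+1}S_i$ guarantees both $C_j\in\Sigma_0$ and $D_j\in\Sigma_0$, and convexity finishes the proof. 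I expect the only genuine subtlety to be this alignment of test sets and CFL constants — in particular the necessity of the alternative decomposition of Remark \ref{altcv} (rather than standard Gauss--Lobatto) for $k=2$ and the specialization $\gamma=1$ in Lemma \ref{lem:p1} for $k=1$ — rather than any new analytic estimate, since all the hard work already resides in Theorem \ref{thm:highFV} and Lemmas \ref{lem:p1}--\ref{lem:p2}.
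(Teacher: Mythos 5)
Your proposal is correct and follows essentially the same route as the paper: the convex splitting $\bar{\mathbf{w}}^{n+1}_j=\frac12 C_j+\frac12 D_j$, Theorem \ref{thm:highFV} with the doubled CFL (\ref{pscflD}) for $C_j$, Lemmas \ref{lem:p1}--\ref{lem:p2} for $D_j$, and the same reconciliation of test sets (taking $\gamma=1$ in Lemma \ref{lem:p1} for $k=1$, and the alternative decomposition of Remark \ref{altcv} for $k=2$). Your write-up is in fact somewhat more explicit than the paper's own proof about where the constants $\lambda_0$ and $\mu_0$ come from, but the argument is the same.
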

\begin{proof} (i)  For the second order scheme with $\beta _0\geq \frac{1}{2}$, we have $S^C_j=x_j+\frac{\Delta x}{2}\{-1,1\}$ and $S^D_j=x_j+\frac{\Delta x}{2}\{-\gamma, \gamma\}$, where $\gamma$ satisfies (\ref{p1gamma}). Therefore, we take $\gamma=1$ so that  $S_j=S^C_j\cup S^D_j=S^C_j$. \\
(ii) {For the third order scheme with $\beta _0\geq 1$ and $\frac{1}{8}\leq \beta _1\leq \frac{1}{4}$, $S^D_j=x_j+\frac{\Delta x}{2}\{-1, \gamma, 1\}$, where $\gamma$ satisfies (\ref{p2gamma}). While for convection part, we consider the alternative decomposition of the cell average (\ref{scv}) in Remark \ref{altcv}, so that $S_j=S^C_j=S^D_j$.}


The CFL conditions are obtained directly by using  (\ref{pscfl}) and the results stated in two Lemmas above.
\end{proof}

\begin{rem}\label{SSP}
It is known that the high order Strong Stability Preserving (SSP) time discretizations are convex combinations of forward Euler, therefore Theorems \ref{thm:highFV}  and  \ref{thm: viscosity} remain valid when using high order SSP schemes.  
In our numerical experiments we will adopt such high order SSP time discretization so as to also achieve high order accuracy in time.    
\end{rem}

\section{Numerical examples}
In this section, we present numerical examples for testing the IRP limiter and the performance of the IRP DG scheme (\ref{dgv}) with the local Lax-Friedrich flux (\ref{LF-}), using the third order SSP Runge-Kutta (RK3) method for time discretization. $\gamma =1.4$ is taken in all of the examples.

The semi-discrete DG scheme is a closed ODE system 
$$
\frac{d}{dt}\mathbf{W}=L(\mathbf{W}),
$$
where $\mathbf{W}$ consists of the unknown coefficients of the spatial basis, and  $L$ is the corresponding spatial operator. The third order SSP RK3 in \cite{GST2001, ShuOsher1988} reads as 
\begin{equation}\label{RK3}
\begin{aligned}
\mathbf{W}^{(1)}=&\mathbf{W}^n+\Delta tL(\mathbf{W}^n),\\
\mathbf{W}^{(2)}=&\frac{3}{4}\mathbf{W}^n+\frac{1}{4}\mathbf{W}^{(1)}+\frac{1}{4}\Delta tL(\mathbf{W}^{(1)}),\\
\mathbf{W}^{n+1}=&\frac{1}{3}\mathbf{W}^n+\frac{2}{3}\mathbf{W}^{(2)}+\frac{2}{3}\Delta tL(\mathbf{W}^{(2)}).
\end{aligned}
\end{equation}
We apply the limiter at each time stage in the RK3 method. Notice that (\ref{RK3}) is a linear convex combination of Euler Forward method, therefore the invariant region is preserved by the full scheme if it is preserved at each time stage.


In the first four examples, the initial condition is chosen as
\begin{align*}
v_0(x)=2-\sin (x), \quad u_0(x)=1, \quad x\in [0,2\pi],
\end{align*}
and the boundary condition is periodic. Using (\ref{r0s0}) we obtain  $r_0=1$ and $s_0=1$, so that the invariant region 
$\Sigma=\{(v, u)\big |\quad r(v, u)\leq 1, \; s(v, u) \geq 1\}$. We fix the initial mesh $h=\frac{2\pi}{32}$ for all four examples. \\

\noindent{\bf Example 1.} {\sl Accuracy test of the limiter}\\
We test the performance of the IRP limiter introduced in Section \ref{sec:3} by comparing the accuracy of the $L^2$ projection (\ref{L2Projection}) with and without limiter (\ref{bb}). From Tables \ref{tab:1P1} and \ref{tab:1P2}, we see that the IRP limiter preserves the accuracy of high order approximations well.\\

\begin{table}[htbp]
\centering
\begin{tabular}{|c|cccc||cccc|}
\hline
$P^1$&\multicolumn{4}{|c||}{Projection without limiter}  &\multicolumn{4}{|c|}{Projection with limiter}
\\ \hline
$\Delta x$ &$L^{\infty}$ error &Order &$L^1$ error & Order &$L^{\infty}$ error &Order &$L^1$ error & Order\\
\hline
h &1.34E-04 &/    &8.53E-05 &/    &6.70E-04 &/    &1.11E-04 &/\\
h/2  &3.35E-05 &2.00 &2.13E-05 &2.00 &1.67E-04 &2.00 &2.45E-05 &2.18\\
h/4 &8.37E-06 &2.00 &5.33E-06 &2.00 &4.18E-05 &2.00 &5.72E-06 &2.10\\
h/8 &2.09E-06 &2.00 &1.33E-06 &2.00 &1.05E-05 &2.00 &1.38E-06 &2.05\\
h/16 &5.23E-07 &2.00 &3.33E-07 &2.00 &2.62E-06 &2.00 &3.39E-07 &2.02\\
\hline
\end{tabular}
\vspace{5pt}
\caption{Accuracy of the $L^2$ projection using $P^1$ approximation.}
\label{tab:1P1}
\end{table}

\begin{table}[htbp]
\centering
\begin{tabular}{|c|cccc||cccc|}
\hline
$P^2$&\multicolumn{4}{|c||}{Projection without limiter}  &\multicolumn{4}{|c|}{Projection with limiter}
\\ \hline
$\Delta x$ &$L^{\infty}$ error &Order &$L^1$ error & Order &$L^{\infty}$ error &Order &$L^1$ error & Order\\
\hline
h  &9.16E-06 &/    &5.86E-06 &/    &9.16E-06 &/    &5.95E-06 &/\\
h/2  &1.15E-06 &3.00 &7.32E-07 &3.00 &1.15E-06 &3.00 &7.35E-07 &3.02\\
h/4 &1.44E-07 &3.00 &9.15E-08 &3.00 &1.44E-07 &3.00 &9.16E-08 &3.00\\
h/8 &1.80E-08 &3.00 &1.14E-08 &3.00 &1.80E-08 &3.00 &1.14E-08 &3.00\\
h/16 &2.25E-09 &3.00 &1.43E-09 &3.00 &2.25E-09 &3.00 &1.43E-09 &3.00\\
\hline
\end{tabular}
\vspace{5pt}
\caption{Accuracy of the $L^2$ projection using $P^2$ approximation.}
\label{tab:1P2}
\end{table}

\noindent{\bf Example 2.}  {\sl Accuracy test of the IRP DG-RK3 scheme solving (\ref{ps})}\\
We apply the IRP DG-RK3 scheme to solve (\ref{ps}), with time step set as $\Delta t=\min (\frac{\Delta x}{3\sigma}, \Delta x^{\frac{2}{3}})$ when using $P^1$ polynomials, and  $\Delta t=\frac{\Delta x}{6\sigma}$  when using $P^2$ polynomials, where 
\begin{align}\label{wavespd}
\sigma =\max \limits_j  \max \left\lbrace \sqrt{-p'(v^-_{j+\frac{1}{2}})},\sqrt{-p'(v^+_{j+\frac{1}{2}})} \right\rbrace.
\end{align}
The numerical results evaluated at $T=0.1$ are given in Table \ref{tab:2P1} and Table \ref{tab:2P2}, where the reference solution is calculated using the fourth order DG scheme on 4096 meshes. These results show that the IRP DG scheme maintains the optimal order of accuracy in $L^1$ norm.
The order of accuracy in $L^{\infty}$ norm is compromised in some cases. 

\begin{table}[htbp]
\centering
\begin{tabular}{|c|cccc||cccc|}
\hline
$P^1$-DG&\multicolumn{4}{|c||}{DG without limiter}  &\multicolumn{4}{|c|}{DG with limiter}
\\ \hline
$\Delta x$ &$L^{\infty}$ error &Order &$L^1$ error & Order &$L^{\infty}$ error &Order &$L^1$ error & Order\\
\hline
h  &1.41E-03 &/    &1.07E-03 &/    &2.54E-03 &/    &1.11E-03 &/\\
h/2  &3.39E-04 &2.05 &2.70E-04 &1.99 &8.92E-04 &1.51 &2.85E-04 &1.96\\
h/4 &8.30E-05 &2.03 &6.59E-05 &2.04 &3.31E-04 &1.43 &6.95E-05 &2.03\\
h/8 &1.98E-05 &2.07 &1.53E-05 &2.10 &8.87E-05 &1.90 &1.66E-05 &2.07\\
h/16 &4.51E-06 &2.14 &3.27E-06 &2.23 &1.35E-05 &2.72 &3.41E-06 &2.28\\
\hline
\end{tabular}
\vspace{5pt}
\caption{Accuracy test with $P^1$-DG scheme for (\ref{ps}). }
\label{tab:2P1}
\end{table}

\begin{table}[htbp]
\centering
\begin{tabular}{|c|cccc||cccc|}
\hline
$P^2$-DG&\multicolumn{4}{|c||}{DG without limiter}  &\multicolumn{4}{|c|}{DG with limiter}
\\ \hline
$\Delta x$ &$L^{\infty}$ error &Order &$L^1$ error & Order &$L^{\infty}$ error &Order &$L^1$ error & Order\\
\hline
h  &2.21E-05 &/    &1.81E-05 &/    &2.79E-04 &/    &3.80E-05 &/\\
h/2  &3.14E-06 &2.82 &2.75E-06 &2.72 &8.85E-05 &1.66 &7.54E-06 &2.33\\
h/4 &3.62E-07 &3.12 &3.13E-07 &3.14 &1.34E-05 &2.72 &8.71E-07 &3.11\\
h/8 &3.87E-08 &3.23 &3.27E-08 &3.26 &1.50E-06 &3.16 &9.84E-08 &3.15\\
h/16 &3.25E-09 &3.57 &2.75E-09 &3.58 &3.01E-07 &2.31 &1.16E-08 &3.09\\
\hline
\end{tabular}
\vspace{5pt}
\caption{Accuracy test with $P^2$-DG scheme for (\ref{ps}).}
\label{tab:2P2}
\end{table}

\noindent{\bf Example 3.}  {\sl Accuracy test of the IRP DG-RK3 scheme solving (\ref{pes})}\\
We apply the IRP DG-RK3 scheme to solve system (\ref{pes}) with $\epsilon =0.01$.  Flux parameters and the time step are chosen as  
\begin{align*}
&  P^1:   \qquad \Delta t =\min\left\{ \frac{\Delta x}{4\sigma},\frac{\Delta x^2}{6\epsilon \beta _0}, \Delta x^{\frac{2}{3}} \right\},  \quad \beta _0=2, \\
 &  P^2:   \qquad \Delta t =\min \left\{ \frac{\Delta x}{12\sigma},\frac{\Delta x^2}{12\epsilon (8\beta _1+\beta _0-2)}\right\}, \quad (\beta _0,\beta _1)=(2,\frac{1}{4}),  
\end{align*}
where $\sigma$ is defined in (\ref{wavespd}). The reference solution is computed by $P^2$-DG using 4096 meshes. 
Both errors and orders of numerical solutions at $T=0.1$ are given  in  Table \ref{tab:3P1} and Table \ref{tab:3P2}, from which we see that  our IRP DG-RK3 scheme maintains the desired order of accuracy.  A noticeable difference between Example 2 and Example 3 is that the accuracy result is better for the latter. One observation that might explain this is that in Example 3, the limiter is imposed only in the first several time steps, yet in Example 2, the limiter is called more frequently in time. \\   


\begin{table}[htbp]
\centering
\begin{tabular}{|c|cccc||cccc|}
\hline
$P^1$-DG&\multicolumn{4}{|c||}{DG without limiter}  &\multicolumn{4}{|c|}{DG with limiter}
\\ \hline
$\Delta x$ &$L^{\infty}$ error &Order &$L^1$ error & Order &$L^{\infty}$ error &Order &$L^1$ error & Order\\
\hline
h  &1.27E-03 &/ &1.03E-03 &/ &1.98E-03 &/ &1.04E-03 &/\\
h/2  &2.94E-04 &2.11 &2.47E-04 &2.05 &3.28E-04 &2.59 &2.48E-04 &2.07\\
h/4 &6.63E-05 &2.15 &5.61E-05 &2.14 &6.63E-05 &2.31 &5.61E-05 &2.14\\
h/8 &1.46E-05 &2.18 &1.19E-05 &2.24 &1.46E-05 &2.18 &1.19E-05 &2.24\\
h/16 &2.86E-06 &2.35 &2.23E-06 &2.42 &2.86E-06 &2.35 &2.23E-06 &2.42\\
\hline
\end{tabular}
\vspace{5pt}
\caption{Accuracy test with $P^1$-DG scheme for (\ref{pes}).}
\label{tab:3P1}
\end{table}

\begin{table}[htbp]
\centering
\begin{tabular}{|c|cccc||cccc|}
\hline
$P^2$-DG&\multicolumn{4}{|c||}{DG without limiter}  &\multicolumn{4}{|c|}{DG with limiter}
\\ \hline
$\Delta x$ &$L^{\infty}$ error &Order &$L^1$ error & Order &$L^{\infty}$ error &Order &$L^1$ error & Order\\
\hline
h    &2.49E-05 &/    &2.41E-05 &/    &2.49E-05 &/    &2.41E-05 &/\\
h/2  &5.26E-06 &2.24 &4.72E-06 &2.35 &5.26E-06 &2.24 &4.72E-06 &2.35\\
h/4  &1.27E-06 &2.05 &9.49E-07 &2.32 &1.27E-06 &2.05 &9.49E-07 &2.32\\
h/8  &2.37E-07 &2.42 &1.74E-07 &2.44 &2.37E-07 &2.42 &1.74E-07 &2.45\\
h/16 &3.64E-08 &2.70 &2.75E-08 &2.66 &3.64E-08 &2.70 &2.75E-08 &2.66\\
h/32 &4.70E-09 &2.96 &3.63E-09 &2.92 &4.70E-09 &2.96 &3.63E-09 &2.92\\
\hline
\end{tabular}
\vspace{5pt}
\caption{Accuracy test with $P^2$-DG scheme for (\ref{pes}).}
\label{tab:3P2}
\end{table}

\noindent{\bf Example 4.} {\sl Convergence of viscous profiles to the entropy solution} \\
In this example, we illustrate the convergence of viscous solutions to the entropy solution when taking $\epsilon \sim O(\Delta x^{r})$ for some $r>0$. 
We apply $P^{k}$-DG to (\ref{pes}), with the reference solution obtained from $P^3$-DG scheme to (\ref{ps}) on 4096 meshes. The time steps are chosen as in Example 3.  From Table \ref{tab:4P1} and Table \ref{tab:4P2} we observe  the optimal convergence as long as  $r\geq k+1$. \\

\begin{table}[htbp]
\centering
\begin{tabular}{|c|cccc||cccc|}
\hline
$P^1$-DG&\multicolumn{4}{|c||}{$\epsilon =\Delta x^2$}  &\multicolumn{4}{|c|}{$\epsilon =\Delta x^3$}
\\ \hline
$\Delta x$ &$L^{\infty}$ error &Order &$L^1$ error & Order &$L^{\infty}$ error &Order &$L^1$ error & Order\\
\hline
h    &9.72E-04 &/    &4.59E-04 &/    &1.24E-03 &/    &8.69E-04 &/\\
h/2  &2.65E-04 &1.88 &1.30E-04 &1.82 &3.29E-04 &1.91 &2.47E-04 &1.81\\
h/4  &6.83E-05 &1.95 &3.65E-05 &1.83 &8.21E-05 &2.00 &6.32E-05 &1.97\\
h/8  &1.70E-05 &2.01 &1.02E-05 &1.83 &1.97E-05 &2.06 &1.50E-05 &2.08\\
h/16 &4.08E-06 &2.06 &2.96E-06 &1.79 &4.50E-06 &2.13 &3.23W-06 &2.22\\
\hline
\end{tabular}
\vspace{5pt}
\caption{Convergence test}
\label{tab:4P1}
\end{table}

\begin{table}[htbp]
\centering
\begin{tabular}{|c|cccc||cccc|}
\hline
$P^2$-DG&\multicolumn{4}{|c||}{$\epsilon =\Delta x^3$}  &\multicolumn{4}{|c|}{$\epsilon =\Delta x^5$}
\\ \hline
$\Delta x$ &$L^{\infty}$ error &Order &$L^1$ error & Order &$L^{\infty}$ error &Order &$L^1$ error & Order\\
\hline
h    &2.60E-04 &/    &1.71E-04 &/    &2.50E-05 &/    &1.95E-05 &/\\
h/2  &3.24E-05 &3.00 &2.17E-05 &2.98 &2.93E-06 &3.09 &2.77E-06 &2.82 \\
h/4  &4.03E-06 &3.01 &2.70E-06 &3.01 &3.55E-07 &3.05 &3.13E-07 &3.15\\
h/8  &5.01E-07 &3.01 &3.38E-07 &3.00 &3.83E-08 &3.21 &3.27E-08 &3.26\\
h/16 &6.19E-08 &3.02 &4.21E-08 &3.00 &3.22E-09 &3.58 &2.73E-09 &3.58\\
\hline
\end{tabular}
\vspace{5pt}
\caption{Convergence test}
\label{tab:4P2}
\end{table}

In the next two examples, we test the IRP DG-RK3 scheme to solve (\ref{ps}) with Riemann initial data. It is known in \cite{Sm94} that for each given state $w_l=(v_l, u_l)$, two shock curves are governed by 
\begin{align}\label{SW}
u=u_l -\sqrt{(v-v_l)(p(v_l)-p(v))},  
\end{align}
and two rarefaction curves are given by 
\begin{align}
u=u_l + \int ^v_{v_l}\sqrt{-p'(y)}dy \label{RF1},  \\
u=u_l - \int ^v_{v_l}\sqrt{-p'(y)}dy \label{RF2}.
\end{align}
From these curves, we identify two cases for our testing.\\

\noindent{\bf Example 5.}  {\sl Shock-rarefaction wave}\\
Consider the following Riemann initial data,  
$$
\mathbf{w}_0(x)=\left\{ \begin{array}{ll} 
(1, 0), & x<0, \\
(\frac{1}{4}, 0.1053), & x>0.
\end{array}
\right.
$$
Following the procedure in \cite[Section A, Chapter 17]{Sm94} we see that such initial data will induce a composite wave, a back shock wave followed by a front rarefaction wave. Therefore, we solve the equations (\ref{SW}) and (\ref{RF2}) for the intermediate state $(\frac{1}{2},-0.9053)$ and                                                                                                                                                                                                                                                                                                                                                                                                                                                                                                                                                                                                                                                                                                                                                                                                                                                                                                                                                                                                                                                                                                                                                                                                                                                                                                                                                                                                                                                                                                                                                                                                                                                                                                                                                                                                                                                                                                                                                                                                                                                                                                                                                                                                                                                                                                    construct the exact solution. The invariant region is given by
$\Sigma=\{(v,u)\big |\quad r(v,u)\leq 0.1053,\; s(v,u)\geq 0.1053\}$. 
We test the $P^1$-DG scheme over $[-1,1]$ at final time $T=0.1$ on 128 meshes. We see in Figure \ref{fig:ShockR} that the oscillations around discontinuities are either damped or oppressed by the IRP limiter. \\

\begin{figure}[htbp]
\centering
\includegraphics[height = 2in]{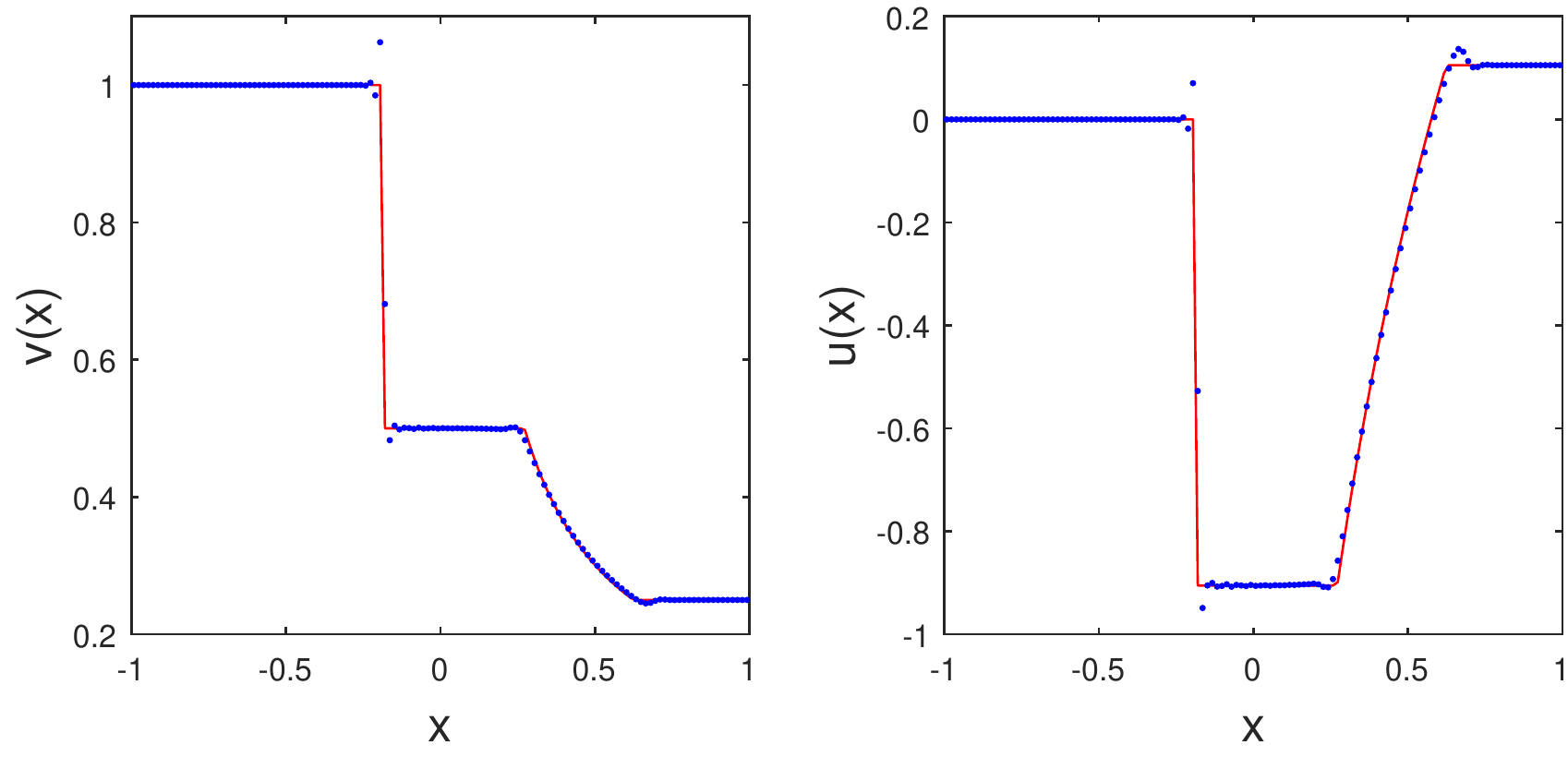}
\includegraphics[height = 2in]{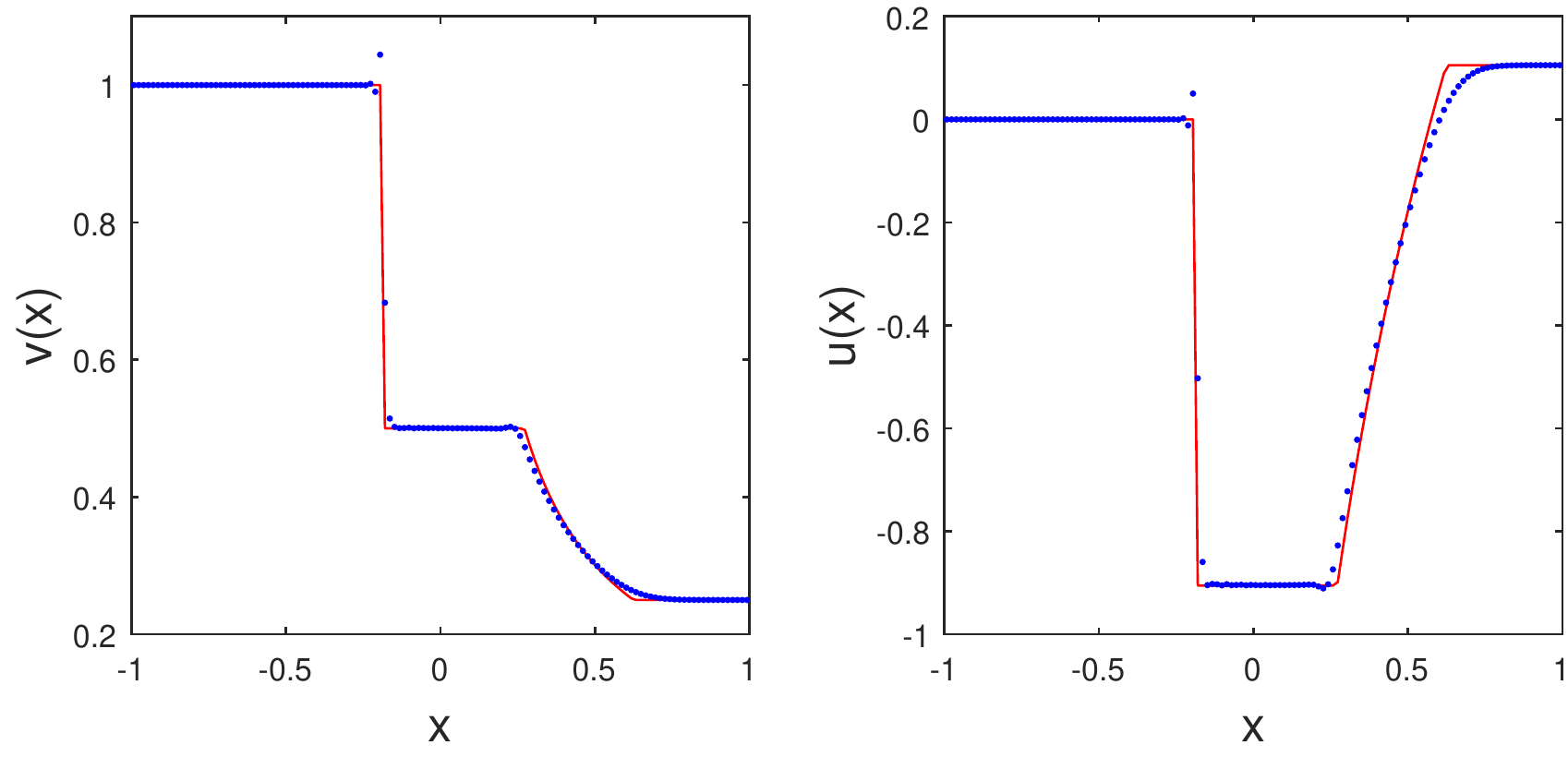}
\caption{Shock-rarefaction wave problem. Exact solution (solid line) vs numerical solution (dots); Top: DG without limiter; Bottom: DG with limiter.}
\label{fig:ShockR}
\end{figure}

\noindent{\bf Example 6.}  {\sl Rarefaction-shock wave}\\  Consider the following Riemann initial data,  
$$
\mathbf{w}_0(x)=\left\{ \begin{array}{ll} 
(1, 0), & x<0,  \\
(2,  -0.3509), & x>0.
\end{array}
\right.
$$
A similar checking tells that such initial data will induce a composite wave, a back rarefaction wave followed by a front shock wave. From (\ref{SW}) and (\ref{RF1}) we find the intermediate state $(1.2,0.2118)$ and construct the exact solution. The invariant region is given by $\Sigma=\{(v,u)\big | \quad r(v,u)\leq 0, \; s(v,u)\geq 0\}$. We test the $P^1$-DG scheme over $[-1,1]$ at final time $T=0.1$ on 128 meshes. It shows from Figure \ref{fig:RarefS} that oscillations occur near discontinuities. However, after the limiter is applied, the oscillations are reduced.

\begin{figure}[htbp]
\centering
\includegraphics[height = 2in]{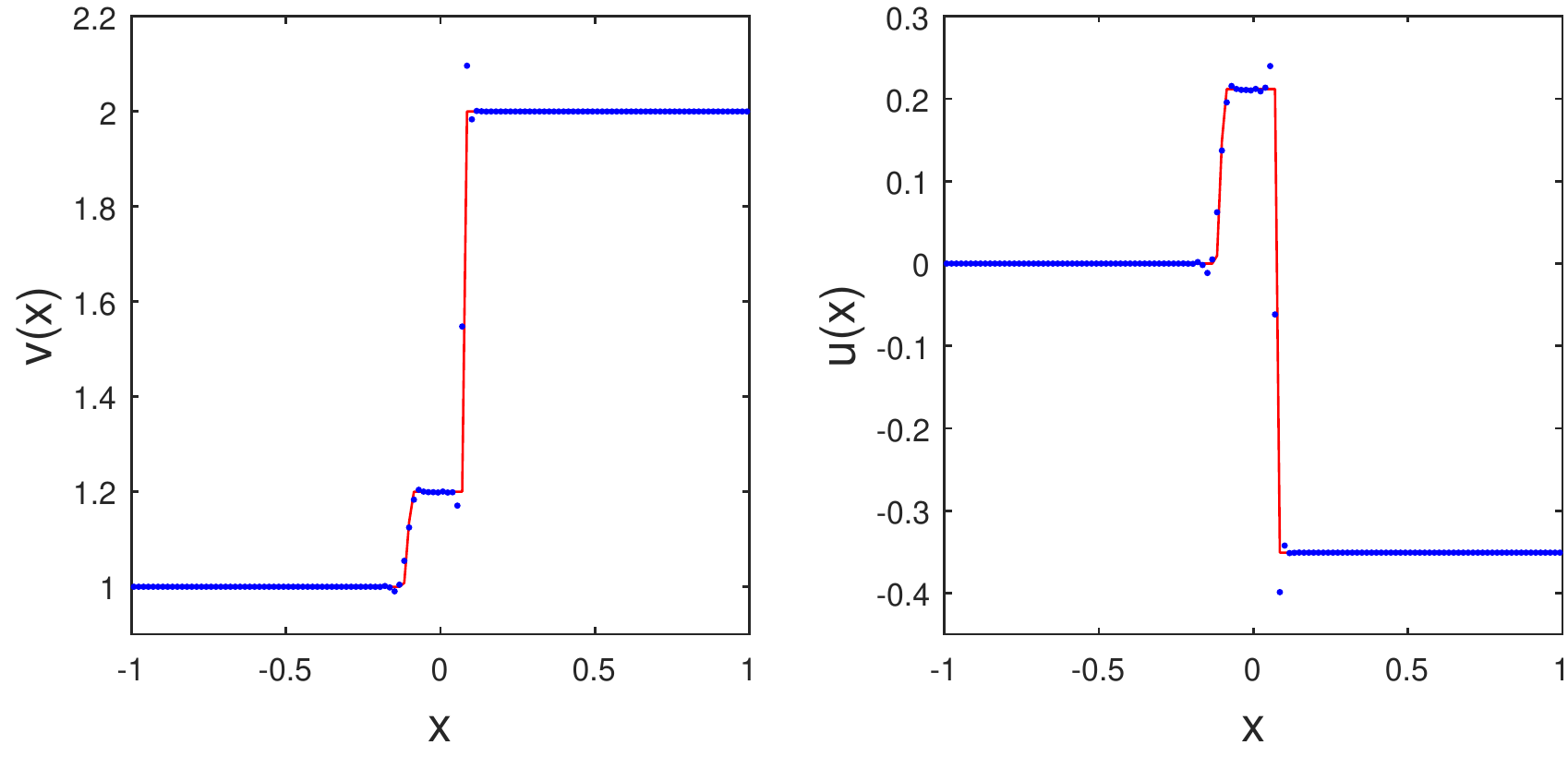}
\includegraphics[height = 2in]{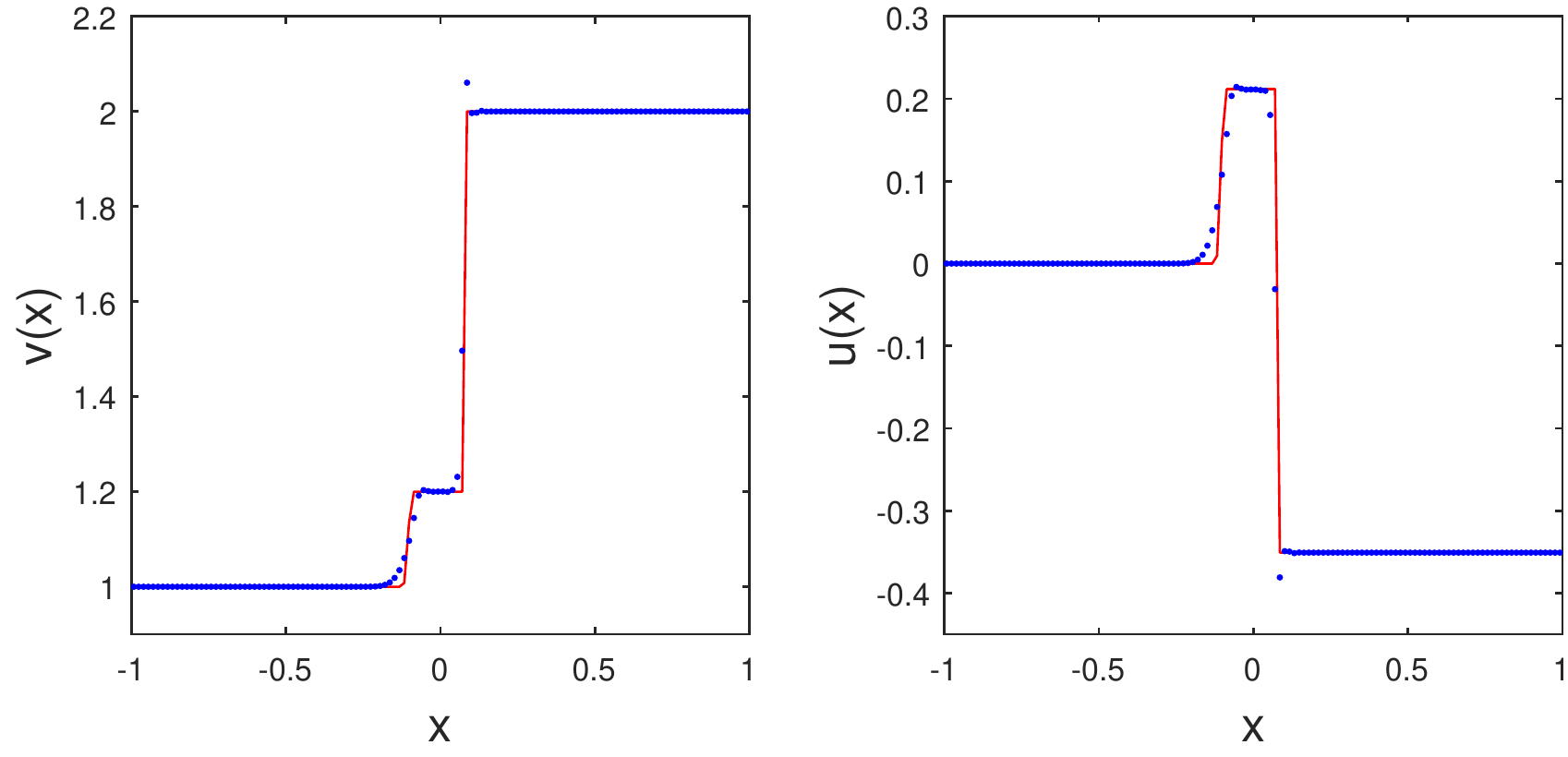}
\caption{Rarefaction-shock wave problem. Exact solution (solid line) vs numerical solution (dots); Top: DG without limiter; Bottom: DG with limiter.}
\label{fig:RarefS}
\end{figure}

\begin{rem}The IRP limiter presented in this work is a mild limiter. As we can see in Example 5 and 6 that not all oscillations can be completely damped, even though the invariant region is preserved. For big oscillations, some stronger limiters might be needed. For example, the weighted essentially non-oscillatory (WENO) limiter developed by Zhong and Shu in \cite{ZhongShu}. 
\end{rem}

\section{Conclusions remarks}
In this paper, we introduced an explicit IRP limiter for the p-system, including the isentropic gas dynamic equation, and the viscous p-system. The limiter itself is shown to preserve the accuracy of high order approximation in general cases through both rigorous analysis and numerical tests. We also specify sufficient conditions, including CFL conditions and test sets, for high order IRP DG schemes with Euler forward time discretization. Numerical tests on such schemes with RK3 time discretization confirm the desired properties. An interesting observation is that the IRP DG scheme solving the viscous p-system is more accurate  than the one solving the p-system.  For the latter the IRP limiter is called much more frequently,  indicating a possible error accumulation through the evolution in time. 


\appendix
\numberwithin{equation}{section}

\section{Proofs of Lemma \ref{lem:p1} and \ref{lem:p2}}\label{appsec:lems}
\noindent{\sl \textbf{Proof of Lemma \ref{lem:p1}}}:  Set 
\begin{align*}
\mathbf{p}_j(\xi) & =\mathbf{w}^n_h(x_j+\frac{\Delta x}{2}\xi)=\mathbf{w}^n_h(x)\big |_{I_j}, \quad \xi\in [-1, 1],
\end{align*} 
then for any $\gamma \in [-1,1] \setminus \{0\}$, we have 
\begin{align*}
\mathbf{p}_j(\xi)=\frac{1}{2}\left(1 - \frac{\xi}{\gamma}\right)\mathbf{p}_j(-\gamma)+\frac{1}{2}\left(1+ \frac{\xi}{\gamma}\right) \mathbf{p}_j(\gamma),
\end{align*}
from which the cell average can be expressed as  
$\bar{\mathbf{w}}^n_j=\frac{1}{2}\mathbf{p}_j(-\gamma)+\frac{1}{2}\mathbf{p}_j(\gamma)$.  
A direct calculation of ${D}_j$ in (\ref{split}) using flux (\ref{df}) gives
\begin{equation}\label{appd:DP1} 
\begin{aligned}
{D}_j=&2\epsilon \mu \alpha _1\mathbf{p}_{j+1}(-\gamma)+2\epsilon \mu \mathbf{p}_{j+1}(\gamma)+2\epsilon \mu\alpha _2\mathbf{p}_{j-1}(-\gamma)+2\epsilon \mu \alpha _1\mathbf{p}_{j-1}(\gamma)\\
&+\left(\frac{1}{2}-2\epsilon\mu (\alpha _1+\alpha _2)\right)\mathbf{p}_j(-\gamma)+\left(\frac{1}{2}-2\epsilon \mu (\alpha _1+\alpha _2)\right)\mathbf{p}_j(\gamma),
\end{aligned}
\end{equation}
where
\begin{align*}
\alpha _1=\frac{\beta _0}{2}+\frac{\beta _0-1}{2\gamma}, \quad \alpha _2=\frac{\beta _0}{2}-\frac{\beta _0-1}{2\gamma}
\end{align*}
and both are non-negative with $\beta _0\geq \frac{1}{2}$ and (\ref{p1gamma}). Note that $\quad \alpha_1+\alpha_2=\beta_0$. If $\mu\leq \frac{1}{4\epsilon \beta _0}$,  (\ref{appd:DP1})  is a convex combination of vector values over set $S^D_i(i=j-1, j, j+1)$, therefore $D_j\in\Sigma$ due to the convexity of $\Sigma$. Moreover, for non-trivial polynomials, a similar argument to the proof of Lemma  \ref{lem:2} shows that  $D_j\in\Sigma_0$.
\hfill\ensuremath{\square}

\hspace{5pt}

\noindent{\sl \textbf{Proof of Lemma \ref{lem:p2}}}:  Using the same notation for $\mathbf{p}_j(\xi)$ as in the proof of Lemma \ref{lem:p1}, we see that 
 in the case of  $\mathbf{p}_j(\xi)\in \mathbb{P}^2$, for any $\gamma \in (-1,1)$,  
 \begin{align*}
\mathbf{p}_j(\xi)=\frac{(\xi-1)(\xi -\gamma)}{2(1+\gamma)}\mathbf{p}_j(-1)+\frac{(\xi -1)(\xi +1)}{(\gamma -1)(\gamma +1)}\mathbf{p}_j(\gamma)+\frac{(\xi +1)(\xi -\gamma)}{2(1-\gamma)}\mathbf{p}_j(1).
\end{align*}
Then the cell average is 
\begin{align*}
\bar{\mathbf{w}}^n_j=c_1\mathbf{p}_j(-1)+c_2\mathbf{p}_j(\gamma)+c_3\mathbf{p}_j(1),
\end{align*}
where
\begin{align*}
c_1=\frac{1+3\gamma}{6(1+\gamma)}, \quad c_2=\frac{2}{3(1-\gamma^2)}, \quad c_3=\frac{1-3\gamma}{6(1-\gamma)}
\end{align*}
are all non-negative for $|\gamma |\leq \frac{1}{3}$. 
A direct calculation gives ${D}_j$ as
\begin{equation}\label{Dcoe}
\begin{aligned}
{D}_j=&\alpha _4\mathbf{p}_j(-1)+\alpha _5\mathbf{p}_j(\gamma)+\alpha _6\mathbf{p}_j(1)\\
&+2\epsilon \mu \alpha _1(\gamma)\mathbf{p}_{j+1}(-1)+2\epsilon \mu \alpha _2(\gamma)\mathbf{p}_{j+1}(\gamma)+2\epsilon \mu \alpha _3(\gamma)\mathbf{p}_{j+1}(1)\\
&+2\epsilon \mu \alpha _3(-\gamma)\mathbf{p}_{j-1}(-1)+2\epsilon \mu \alpha _2(-\gamma)\mathbf{p}_{j-1}(-\gamma)+2\epsilon \mu \alpha _1(-\gamma)\mathbf{p}_{j-1}(1)
\end{aligned}
\end{equation}
with
\begin{align*}
&\alpha _1(\gamma)=\beta _0+\frac{8\beta _1-3-\gamma}{2(\gamma +1)}, \quad \alpha _2(\gamma)=\frac{8\beta _1-2}{\gamma ^2-1}, \quad \alpha _3(\gamma)=\frac{8\beta _1-1-\gamma}{2(1-\gamma)},\\
&\alpha _4=\frac{1+3\gamma}{6(1+\gamma)}-2\epsilon \mu (\alpha _3(-\gamma)+\alpha _1(\gamma)), \\
&\alpha _5=\frac{2}{3(1-\gamma^2)}-2\epsilon \mu (\alpha _2(-\gamma)+\alpha _2(\gamma)), \\
&\alpha _6=\frac{1-3\gamma}{6(1-\gamma)}-2\epsilon \mu (\alpha _1(-\gamma)+\alpha _3(\gamma)).
\end{align*}
Note that (\ref{Dcoe}) is a convex combination of vector values over set $S^D_j$  if $\alpha_i \geq 0$ for $i=1,\cdots ,6$. This is guaranteed by (\ref{p2beta}) and $|\gamma|\leq 8\beta _1 -1$, together with $\mu \leq \mu _0$ given in (\ref{mu0}).
\hfill\ensuremath{\square}

\section{ Is $C_4$ uniformly bounded?}\label{appsec:C4}
In this appendix, we give two examples on the magnitude of  $C_4$, which is defined in (\ref{c4}) in the proof of Lemma \ref{lem:5}. 
\\

\noindent{\bf Example 1 ( $C_4$ is uniformly bounded)}  \\
Consider $v(x)=1+h^2x^2$, $u(x)=1$ for $x\in[0,1]$, where $0<h\ll 1$.
For $\gamma=3$, we have 
\begin{align*}
r(v(x),u(x))=1+\sqrt{3}\left(-1+\frac{1}{1+h^2x^2}\right), \quad s(v(x),u(x))=1-\sqrt{3}\left(-1+\frac{1}{1+h^2x^2}\right),
\end{align*}
and $r_0=s_0=1$.

Now, consider linear interpolation of $v(x)$ and $u(x)$ over $[0,1]$ as follows
\begin{align*}
\tilde{v}(x)=1-\frac{1}{8}h^2+\frac{3}{4}h^2x, \quad \tilde{u}(x)=1,
\end{align*}
where the interpolation points are chosen as $\frac{1}{4}$ and $\frac{1}{2}$. The corresponding cell averages of two polynomials are
\begin{align*}
\bar{\tilde{v}}=1+\frac{1}{4}h^2, \quad \bar{\tilde{u}}=1.
\end{align*}
Also, we can find
\begin{align*}
& r(\tilde{v}(x),\tilde{u}(x))=1+\sqrt{3}\left(-1+\frac{1}{1-\frac{h^2}{8}+\frac{3h^2}{4}x}\right), \\
&  s(\tilde{v}(x),\tilde{u}(x))=1-\sqrt{3}\left(-1+\frac{1}{1-\frac{h^2}{8}+\frac{3h^2}{4}x}\right).
\end{align*}
We can verify that $ r(\tilde{v}(x),\tilde{u}(x))>1$, $ s(\tilde{v}(x),\tilde{u}(x))<1$ for $x <\frac{1}{6}$;  namely, $\mathbf{q}(x)=(\tilde{v}(x),\tilde{u}(x))$ lies outside of invariant region.

Let $\bar{r}=r(\bar{\tilde{v}},\bar{\tilde{u}})$, $\bar{s}=s(\bar{\tilde{v}},\bar{\tilde{u}})$, then we have
\begin{align*}
r_0-\bar{r}=-\sqrt{3}\left(-1+\frac{1}{1+\frac{h^2}{4}}\right), \quad \bar{s}-s_0=-\sqrt{3}\left(-1+\frac{1}{1+\frac{h^2}{4}}\right),
\end{align*}
which indicates $C_4=1$.\\

\noindent{\bf Example 2. ($C_4=O(h^{-2})$)} \\
Consider $v(x)=\frac{2\sqrt{3}}{2\sqrt{3}+(1+h^2)2(x-1)}$, $u(x)=(1-h^2)x+\frac{h^2-1}{2}$ for $x\in[0,1]$, where $0<h\ll 1$. For $\gamma =3$, we have
\begin{align*}
r(v(x),u(x))=\frac{-3-h^2}{2}+2x,\quad s(v(x),u(x))=\frac{1+3h^2}{2}-2h^2x,
\end{align*}
and $r_0=s_0=\frac{1-h^2}{2}$.

Now, consider linear interpolation of $v(x)$ and $u(x)$ over $[0,1]$ as follows
\begin{align*}
\tilde{v}(x)=&2\sqrt{3}\left(\frac{1}{1-2\sqrt{3}+h^2}-\frac{4}{3-4\sqrt{3}+3h^2}\right)+\frac{-8\sqrt{3}(1+h^2)x}{(-3+4\sqrt{3}-3h^2)(-1+2\sqrt{3}-h^2)}\\
\tilde{u}(x)=&\frac{h^2-1}{2}+(1-h^2)x,
\end{align*}
where the interpolation points are chosen as $\frac{1}{4}$ and $\frac{1}{2}$. The corresponding averages of two polynomials are
\begin{align*}
\bar{\tilde{v}}=\frac{2\sqrt{3}}{-1+2\sqrt{3}-h^2}, \quad \bar{\tilde{u}}=0.
\end{align*}
Let $\bar{r}=r(\bar{\tilde{v}},\bar{\tilde{u}})$, $\bar{s}=s(\bar{\tilde{v}},\bar{\tilde{u}})$, then we have
\begin{align*}
\bar{r}=\frac{-1-h^2}{2}, \quad \bar{s}=\frac{1+h^2}{2}
\end{align*}
and
\begin{align*}
r_0-\bar{r}=1, \quad \bar{s}-s_0=h^2.
\end{align*}
In the following table, we show that with certain choices of $h$, we have $\theta_2<\theta_1<1$, which indicates $C_4=\frac{1}{h^2}$, where $\theta _1$ and $\theta _2$ are as defined in \eqref{limiter}, 
\begin{table}[htbp]
\centering
\begin{tabular}{|c||c|c|c|c||c|c|c|c|}
\hline
h &$r_{\max}$ &$r_0$ &$\bar{r}$ &$\theta_1$ &$s_{\min}$ &$s_0$ &$\bar{s}$ &$\theta_2$\\
\hline
0.5  &3.831 &0.375 &-0.625 &0.224 &-3.081 &0.375 &0.625 &0.068\\
0.1  &1.310 &0.495 &-0.505 &0.551 &-0.320 &0.495 &0.505 &0.012\\
0.01 &1.27823 &0.49995 &-0.50005 &0.562234 &-0.27833 &0.49995 &0.50005 &0.00013\\
\hline
\end{tabular}
\end{table}


\section*{Acknowledgements}
The authors would like to thank Professor Chi-Wang Shu for helpful discussions on the subtlety of the limiter in the system case, which led to our construction of two examples in Appendix B. This research was supported by the National Science Foundation under Grant DMS1312636.
     
\bigskip

\bibliographystyle{plain}

\end{document}